\pgfplotsset{compat=newest} 
\DeclareMathOperator{\R}{\mathbb{R}}
\DeclareMathOperator{\N}{\mathbb{N}}
\DeclareMathOperator{\p}{\mathbb{P}}
\DeclareMathOperator{\domega}{\mathrm{d}\mathbb{P}(\omega)}
\DeclareMathOperator{\E}{\mathbb{E}}
\DeclareMathOperator{\G}{\mathbf{G}}
\DeclareMathOperator{\fu}{\mathbf{u}}
\DeclareMathOperator{\fv}{\mathbf{v}}
\DeclareMathOperator{\fw}{\mathbf{w}}
\DeclareMathOperator{\fR}{\mathcal{\mathbf{R}}}
\DeclareMathOperator{\cR}{\mathcal{R}}
\DeclareMathOperator{\cE}{\mathcal{E}}
\DeclareMathOperator{\fS}{\mathbf{S}}
\DeclareMathOperator{\ff}{\mathbf{f}}
\DeclareMathOperator{\fpsi}{\bm{\psi}}
\DeclareMathOperator{\dbracei}{[\![}
\DeclareMathOperator{\dbraceo}{]\!]}
\DeclareMathOperator{\Var}{\text{Var}}
\DeclareMathOperator{\reconst}{\hat{\fu}^{st}}
\renewcommand{\P}{\mathbb{P}}
\newlength{\figureheight}
\newlength{\figurewidth}
\newcounter{tikzsubfigcounter}[figure]
\renewcommand{\thetikzsubfigcounter}{\the\numexpr\value{figure}+1\relax\alph{tikzsubfigcounter}}
\newcounter{tikzsubfigcounterinvisible}[figure]
\renewcommand{\thetikzsubfigcounterinvisible}{\the\numexpr\value{figure}+1\relax\alph{tikzsubfigcounterinvisible}}
\begin{document}

\title{A posteriori error
analysis for random scalar conservation laws using the Stochastic Galerkin method}
\shorttitle{A posteriori error analysis for random scalar conservation laws}

\author{%
{\sc
Fabian Meyer\thanks{Email: fabian.meyer@mathematik.uni-stuttgart.de},
Christian Rohde}\\[2pt]
Institute of Applied Analysis and Numerical Simulation, University of Stuttgart, Pfaffenwaldring 57, 70569 Stuttgart, Germany \\[6pt]
{\sc and}\\[6pt]
{\sc Jan Giesselmann}\\[2pt]
MathCCES, RWTH Aachen University, Schinkelstra\ss e 2, 52062 Aachen, Germany
}

\shortauthorlist{F. Meyer \emph{et al.}}

\maketitle


\begin{abstract}{
In this article we present an a posteriori error estimator for the spatial-stochastic error of a Galerkin-type discretization of an initial value problem for a random hyperbolic conservation law.
For the stochastic discretization we use the Stochastic Galerkin method and for the spatial-temporal discretization of the Stochastic Galerkin system a Runge-Kutta Discontinuous Galerkin method. 
The estimator is obtained using smooth reconstructions of the discrete solution. 
Combined with the relative entropy stability framework of \cite{dafermos2005hyperbolic}, 
this leads to computable error bounds for the space-stochastic discretization error.

Moreover, it turns out that the error estimator  admits a splitting into one part 
representing the spatial error, and a remaining term, which can be interpreted as the stochastic error.
This decomposition allows us to balance the errors arising from spatial and stochastic discretization.
We conclude with some numerical examples confirming the theoretical findings.}
{hyperbolic conservation laws, random pdes, a posteriori error estimates, stochastic Galerkin method, discontinuous Galerkin method, relative entropy}
\end{abstract}

\maketitle

\section{Introduction}
In numerical simulations, accounting for uncertainties in input quantities has become an important issue in the last decades. 
The two main sources for uncertainties are the limitations in measuring  physical parameters exactly and the absence of knowledge of the underlying physical processes. 
Therefore, a whole new field of research, Uncertainty Quantification, has evolved in recent years. 
It addresses the question on how much we can rely on highly accurate numerical solutions if there are uncertain parameters. 

For the quantification of uncertainties, there exist two major approaches. 
On the one hand statistical approaches such as Monte-Carlo type methods sample the random space to obtain statistical information,
like mean, variance or higher order moments of the corresponding random field. On the other hand non-statistical approaches, 
like the intrusive and non-intrusive polynomial chaos expansion, approximate the random field 
by a series of polynomials and derive deterministic models for the stochastic modes. 

The theoretical foundation for the polynomial chaos expansion has been laid in \cite{Wiener} 
and can be described as a  polynomial approximation of Gaussian random variables to represent random processes. 
Later, the approach has been generalized to a broader class of distributions. The authors in \cite{XiuKarniadakis} considered polynomials from the so-called Askey-scheme,
where the approximating polynomials are orthonormal polynomials 
with respect to an inner product induced by the corresponding probability density function of the chosen probability distribution.
The intrusive spectral projection, also known as Stochastic Galerkin (SG) approach, 
considers a weak formulation of the partial differential equation with respect to the stochastic variable 
and uses the corresponding orthonormal polynomials as ansatz and test functions. 
Compared to elliptic and parabolic equations (\cite{Ghanem,20.500.11850/154978,10.2307/4101387}) the development of the SG method
for hyperbolic equations faces additional problems. From an analytical point of view  (nonlinear) elliptic and parabolic equations are better understood than (nonlinear) hyperbolic equations. Moreover, solutions of nonlinear hyperbolic equations may develop shocks (discontinuities) in finite time 
and these shocks also propagate into the stochastic space, cf. \cite{LEMAITRE200428}. Another problem is the possible loss of hyperbolicity 
of the resulting SG system (\cite{Despres2013}). Therefore new approaches to treat these problems have to be developed. 
For an overview on recent work on Uncertainty Quantification for hyperbolic equations see \cite{mishraUQ, maitre2010spectral, pettersson2015polynomial}. 
New numerical schemes for the SG system can be found in \cite{jin2017discrete,MR3269040, doi:10.1137/050627630} and for the convergence analysis of approximate  solutions of the SG system
see \cite{420bdedc3eb340c0b55bf59b120d7e5d, Xiu, EPFL-ARTICLE-185741}.
However, in all these works the dimension of the discrete stochastic
space is chosen in an ad hoc way, in particular independent of the spatial-temporal resolution.
Space-stochastic adaptive schemes, based on heuristic indicators, are treated in \cite{MR3351779, doi:10.1137/120863927}.  
A more reliable link between stochastic and space-time resolution can be provided by an appropriate a posteriori error estimator.

For the a posteriori analysis of deterministic scalar conservation laws,
rigorous approaches accompanied with corresponding adaptive algorithms can be found in \cite{gosse2000two, kroner2000posteriori},  
see also \cite{cockburn2003continuous, dedner2007error} and references therein. 
These estimates were derived by exploiting Kru\v{z}kov's estimates. A different approach for the a posteriori analysis 
uses reconstructions of the discrete solutions to estimate the error in a suitable energy norm, cf. \cite{Georgoulis2014}, or in the $L^2$-norm (\cite{DG_16, GMP_15}). 
The approach in \cite{DG_16} combines the reconstruction of the discrete solutions with the relative entropy framework \cite{dafermos2005hyperbolic}. 
In contrast to Kru\v{z}kov's scalar theory, this approach is also applicable for hyperbolic systems that are endowed with a strictly convex entropy/entropy-flux pair. 

While there exists a  rather firm theory for the a posteriori analysis of random elliptic and parabolic equations 
in combination with the SG method (\cite{MR2813239,MR1870425, MR3154028,MR2369381,MR3396480} ), the a posteriori analysis for hyperbolic problems is less developed. 
Beside the missing a posteriori analysis, most applications of the SG method for hyperbolic equations are rather ad-hoc, 
in the sense that the resolution in the stochastic space is rather arbitrary and, in particular, not related to resolution in space and time. 
We address exactly this issue and, in particular, provide a posteriori  error control in the form of a computable upper bound for the ``overall'', i.e., spatio-temporal and stochastic, error.
Moreover, 
the derived error estimator splits into two parts. One part quantifies errors caused by discretising stochastic space while the other part quantifies errors due to discretization of time and physical space. This allows a precise tuning of the stochastic discretization with respect to the space-time discretization.
 
We study uncertainty in hyperbolic conservation laws resulting from uncertain initial data, 
para-metrized by absolutely continuous random variables or random source terms.
The main contribution of this paper is an a posteriori error estimator for a class of numerical schemes.
In the schemes under consideration the random conservation law is discretised in stochastic space by the SG method. 
The resulting deterministic SG system is then numerically solved by a Runge-Kutta Discontinuous Galerkin method. 
Our a posteriori analysis follows the approach in \cite{DG_16} in that it makes use of a space-time-stochastic reconstruction of the discrete solutions. 
It turns out that this reconstruction satisfies a perturbed version of the original random conservation law with computable residual,
so that
the relative entropy framework allows us to
 prove a computable upper bound for the space-stochastic error. This upper bound is the main result of this paper (Theorem \ref{stochApost}).
The resultant estimator is valid also for discontinuous solutions, but it is only bounded as long as solutions are Lipschitz continuous.
A particular feature of our analysis is that the corresponding residual admits an orthogonal decomposition into a stochastic residual and a spatial residual (Theorem \ref{normOrtho}). 
This splitting enables us to determine if the overall error is dominated by the stochastic error, or the spatial error and thus, if we should increase resolution in stochastic space, 
or refine in physical space to efficiently decrease the overall error. 
Furthermore, our a posteriori analysis complements the a priori analysis in \cite{420bdedc3eb340c0b55bf59b120d7e5d, Xiu}, where the authors proved spectral convergence of the approximate  solutions of the SG system. In our numerical experiments the stochastic residual converges spectrally, i.e. our estimator for the SG-discretization error  converges in the same way as the SG-discretization error according to the findings of \cite{420bdedc3eb340c0b55bf59b120d7e5d, Xiu}.

The outline of this work is as follows: In Section 2,  we recall the notion of random entropy solutions and give a short introduction to the SG method. 
In Section 3 we describe how to reconstruct the numerical solutions for fully-discrete SG Discontinuous Galerkin schemes. 
Furthermore, we show how to construct so-called space-time-stochastic reconstructions for the random scalar conservation law. 
We then prove the above mentioned a posteriori error estimate and the orthogonal decomposition. 
Finally, in Section 4 we present numerical experiments illustrating the scaling behavior of the two parts of the space-stochastic residual.
\section{Notation and Preliminaries}
Let $(\Omega, \mathcal{F},\p)$ be a probability space, where $\Omega$ is the set of all elementary events $\omega \in \Omega$, $\mathcal{F}$ is a $\sigma$-algebra
on $\Omega$ and $\P$ is a probability measure. We further consider a second measurable space ($E, \mathcal{B}(E))$, with $E$ being a Banach space 
and $\mathcal{B}(E)$ the corresponding Borel $\sigma$-algebra. An $E$-valued random field  is any mapping $X:\Omega \to E$ such that 
$\{\omega \in \Omega~: ~X(\omega) \in B\}\in \mathcal{F}$ holds for any $B\in \mathcal{B}(E)$. 
For $ p \in [1,\infty) \cup \{\infty\}$ we consider the Bochner space $L^p(\Omega;E)$ of $p$-summable $E$-valued random variables $X$ equipped with the norm
\begin{align*}
\|X\|_{L^p(\Omega;E)} :=
\begin{cases}
(\int \limits_\Omega \|X(\omega)\|_E^p ~\domega)^{1/p} = \E(\|X\|_E^p)^{1/p},\quad& 1\leq p<\infty, \\
\operatorname{esssup}\limits_{\omega \in \Omega} \|X(\omega)\|_E,  &p= \infty .
\end{cases} 
\end{align*}

In the following we consider absolutely continuous, real-valued random variables $\xi:\Omega\to \R$, this means that there exists a density function 
$p_{\xi} :\R \to \R_+$, such that $\int \limits_{\R} p_{\xi}(y)~ \mathrm{d}y=1$ and  $\P[ \xi(\omega) \in A]=\int \limits_A p_{\xi}(y)~ \mathrm{d}y$,  for any $A \in \mathcal{B}(\R)$.

For $T \in (0,\infty)$ and $f\in C^2(\R)$ the equation of interest is the scalar conservation law with uncertain initial data and source term:
\begin{equation} \label{stochCL}  \tag{RIVP}
\begin{aligned} 
&\partial_t u(t,x,\xi(\omega))+ \partial_x f(u(t,x,\xi(\omega)))=S(t,x,\xi(\omega)) ,&(t,x,\omega) &\in (0,T)\times [0,1]_{per}\times \Omega,
\\& u(0,x,\xi(\omega))= u^0(x,\xi(\omega)), ~ &(x,\omega) &\in [0,1]_{per}\times \Omega. \notag
\end{aligned} 
\end{equation} 
To ensure the existence of weak solutions of (RIVP), via path-wise use of Kru\v{z}kov's theorem,  we impose the following conditions on the initial data and the source term.
\begin{enumerate}[label=(A\arabic*)]
\item There exists a constant $M_1>0$, such that 
$ \| u^0(\cdot,\xi(\omega))\|_{L^{\infty}(0,1)}\leq M_1$, $\p$-a.s. $\omega \in \Omega$ and we have $ u^0\in L^2(\Omega; L^2(0,1))$.
\item There exists a constant $M_2>0$, such that  $\|S(\cdot,\cdot, \xi(\omega))\|_{L^\infty((0,T)\times (0,1))} \leq M_2$, $\p$-a.s. $\omega \in \Omega$ and 
$S \in L^2(\Omega;L^2((0,T)\times (0,1))$, $S(\cdot,\cdot, \xi(\omega))\in C^1( [0,T]\times [0,1]_{per})$, $\p$-a.s. $\omega\in \Omega$ .
\end{enumerate}
To ensure uniqueness of a weak solution of (RIVP) we apply the well-known  entropy criterion. We say that a convex function
$\eta \in C^1(\R)$ and $q\in C^1(\R)$ form an entropy/entropy-flux pair ($\eta,q$), if they satisfy $q'=\eta' f'$.
Following the definition in \cite{Mishra} we define random entropy solutions, which correspond $\P$-a.s. to entropy solutions as in the deterministic case. 
\begin{definition}[Random entropy solution]
We call a weak solution ${u\in L^2(\Omega;L^1((0,T)\times (0,1))}$, with $ u(\cdot,\cdot,\xi(\omega)) \in L^\infty((0,T)\times(0,1)),~ \p\text{-a.s. } \omega\in \Omega,$  
a \textbf{random entropy solution of  (RIVP)}, if the inequality
\begin{align} \label{entropySol}
&\int \limits_0^T \int \limits_0^1 \eta(u(t,x,{\xi(\omega})) ) \partial_t \phi(t,x)  + q(u(t,x, \xi(\omega)) )\partial_x \phi(t,x)  \mathrm{d}x \mathrm{d}t \nonumber   
 \\ + \int \limits_0^T &\int \limits_0^1S (t,x,\xi(\omega))  \eta'(u(t,x,{\xi(\omega}))) \phi(t,x)~ \mathrm{d}x \mathrm{d}t 
 + \int \limits_0^1 \eta (u^0(x, \xi(\omega)) ) \phi(0,x)~\mathrm{d}x  \geq 0
\end{align}
holds $\P$-a.s.~$\omega\in \Omega$, for all $\phi \in C_c^{\infty}([0,T)\times [0,1]_{per}, \R_+)$ and for all entropy/entropy-flux pairs $(\eta,q)$. 
\end{definition}
\begin{remark} ~
 For $u(\cdot,\cdot,\xi(\omega)) \in L^\infty((0,T)\times(0,1)),~ \p\text{-a.s. } \omega\in \Omega$, we have 
$$ \partial_t \eta(u(\cdot,\cdot,\xi(\omega)))+ \partial_x q(u(\cdot,\cdot,\xi(\omega)))-S (\cdot,\cdot,\xi(\omega))  \eta'(u(\cdot,\cdot,{\xi(\omega}))) \leq 0 $$ 
in the distributional sense. Therefore, this distribution has a sign and,
thus, is a measure. Then, we may replace the smooth test functions in Definition \ref{entropySol} by Lipschitz continuous ones, 
cf. \cite[Section 4.5]{dafermos2005hyperbolic}.
\end{remark}
In \cite{Mishra} it is shown, that under the conditions (A1) - (A2), there exists a unique random entropy solution $u \in L^2(\Omega;C((0,T);L^1(0,1)))$ of (RIVP). 
The approach relies on the path-wise existence and uniqueness of entropy solutions of the scalar conservation law and the following path-wise mapping properties 
of the data-to-solution operator $\mathcal{S}(t): u^0(\cdot,\xi(\omega)) \mapsto \mathcal{S}(t) u^0(\cdot, \xi(\omega))$. These can be found in \cite{kruvzkov1970first} 
and will be useful for the proof of Lemma \ref{bochner} below.
\begin{itemize}
\item $L^1(0,1)$-contraction:
\begin{align*}
 \|\mathcal{S}(t)u^0(\cdot,\xi(\omega)) - \mathcal{S}(t)v^0(\cdot,\xi(\omega))\|_{L^1(0,1)} 
\leq  \|u^0(\cdot,\xi(\omega))- v^0(\cdot,\xi(\omega))\|_{L^1(0,1)},
\end{align*}
for almost all $t \in (0,T)$, $\p$-a.s. $\omega \in \Omega$.
\item $L^\infty(0,1)$-boundedness: there exists a constant $M_3=M_3(T, M_1, M_2)>0$, such that  
$$ \|\mathcal{S}(t)u^0(\cdot,\xi(\omega))\|_{L^\infty(0,1)}\leq  M_3, $$
for almost all $t \in (0,T)$, $\p$-a.s. $ \omega \in \Omega$.
\end{itemize}
\begin{remark}
Although we apply Kru\v{z}kov's notion of entropy solutions for the well-posedness of (RIVP), our a posteriori estimates rely on the relative entropy framework of Dafermos instead of Kru\v{z}kov's
doubling of variables technique.
This has three advantages.
Firstly,  for smooth solutions, upper bounds for the numerical error that result from Kru\v{z}kov's theory are only of half-order, i.e. have a rate of convergence of ${(p+1)}/{2}$,
when $p\in \N$ is the polynomial degree, cf. \cite{gosse2000two, kroner2000posteriori}.
In contrast, the worst case for error bounds for smooth solutions obtained using the relative entropy framework is loosing one order, i.e.  the rate of convergence is $p$.
Secondly, the resulting residual of the relative entropy framework allows for an orthogonal decomposition into a spatial (deterministic) residual and a stochastic residual.
This decomposition relies on a discretization error estimate for the stochastic Galerkin system and, thus, cannot be obtained when using Kru\v{z}kov estimates.
Finally, the relative entropy framework requires only one strictly convex  entropy/entropy flux pair and, hence, is extendible to the systems case.
\end{remark}
In the following sections we are dealing with the Bochner space $L^2(\Omega;L^2((0,T)\times (0,1)))$, instead of $L^2(\Omega;L^1((0,T)\times (0,1)))$ as in \cite{Mishra}. 
To ensure that the Bochner integrals are well-defined we prove the following 
\begin{lemma} \label{bochner}
Under the conditions (A1) - (A2) the mapping 
$$(\Omega, \mathcal{F}) \ni \omega \mapsto u(\cdot,\cdot,\xi(\omega)) \in \big(L^2((0,T)\times(0,1)),\mathcal{B}(L^2((0,T)\times(0,1) ) )\big)$$ is measurable. 
\end{lemma}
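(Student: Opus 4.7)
The plan is to reduce strong (Bochner) measurability to the path-wise well-posedness results for Kru\v{z}kov solutions by approximating the random data with simple functions and then passing to the limit. Since $L^2((0,T)\times(0,1))$ is separable, strong and Borel measurability coincide, so it suffices to exhibit the random field $\omega\mapsto u(\cdot,\cdot,\xi(\omega))$ as an almost-sure pointwise limit of measurable simple functions.

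The first step is to exploit the Bochner integrability of the data. Because $u^0\in L^2(\Omega;L^2(0,1))$ and $S\in L^2(\Omega;L^2((0,T)\times(0,1)))$, both are strongly measurable, and hence there exist sequences of simple random fields $(u^0_n)_{n\in\N}$, $(S_n)_{n\in\N}$, each taking only finitely many values in the respective target spaces, with $u^0_n(\cdot,\xi(\omega))\to u^0(\cdot,\xi(\omega))$ in $L^2(0,1)$ and $S_n(\cdot,\cdot,\xi(\omega))\to S(\cdot,\cdot,\xi(\omega))$ in $L^2((0,T)\times(0,1))$ for $\P$-almost every $\omega$. By passing to subsequences and truncating, I may moreover assume these approximants are uniformly bounded in $L^\infty$ by constants $\tilde M_1,\tilde M_2$ and still converge in $L^1$ as well. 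For each $n$, since $u^0_n$ and $S_n$ take only finitely many values on measurable sets $A^n_1,\dots,A^n_{k_n}\in\mathcal{F}$, the path-wise Kru\v{z}kov solution operator produces a corresponding entropy solution $u_n$ that is itself simple (one deterministic function on each $A^n_j$), hence trivially measurable into $L^2((0,T)\times(0,1))$.

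The second step is to upgrade $L^1$-contraction to $L^2$-convergence. By the $L^1$-contraction of $\mathcal{S}(t)$, adapted to include the source term via Duhamel/Gronwall-type arguments path-wise,
\begin{equation*}
\|u(t,\cdot,\xi(\omega))-u_n(t,\cdot,\xi(\omega))\|_{L^1(0,1)}
\leq \|u^0(\cdot,\xi(\omega))-u^0_n(\cdot,\xi(\omega))\|_{L^1(0,1)}
+\int_0^t\|S(s,\cdot,\xi(\omega))-S_n(s,\cdot,\xi(\omega))\|_{L^1(0,1)}\,\mathrm{d}s,
\end{equation*}
for $\P$-a.e.\ $\omega$ and a.e.\ $t\in(0,T)$. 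Combined with the $L^\infty$-boundedness, both $u_n$ and $u$ are uniformly bounded in $L^\infty((0,T)\times(0,1))$ by some $M=M(T,M_1,\tilde M_1,M_2,\tilde M_2)$, and the interpolation $\|v\|_{L^2}^2\leq \|v\|_{L^\infty}\|v\|_{L^1}$ together with integration in time then gives $u_n(\cdot,\cdot,\xi(\omega))\to u(\cdot,\cdot,\xi(\omega))$ in $L^2((0,T)\times(0,1))$, $\P$-a.s.

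The final step is the standard observation that an almost-sure pointwise limit of strongly measurable maps into a separable Banach space is strongly measurable: modifying on a $\P$-null set and using separability of $L^2((0,T)\times(0,1))$ together with Pettis' theorem yields the Borel measurability claimed. The only delicate point, and the one I would treat carefully, is step two: the standard $L^1$-contraction for Kru\v{z}kov solutions is usually formulated without source terms, so I would either cite an existing source-term version or briefly justify the inequality above by applying the contraction to the flows generated by $u^0_n,S_n$ and $u^0,S$ and using Gronwall's lemma on the $L^1$-difference; the remainder of the argument is routine once this inequality is in hand.
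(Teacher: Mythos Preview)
Your argument is correct, but the paper takes a shorter route. Rather than approximating the data by simple functions and passing to the limit, the paper shows directly that the data-to-solution map $\mathcal{S}:u_0\mapsto u$ is H\"older continuous (with exponent $\tfrac12$) from the relevant bounded subset of $L^2(0,1)$ into $L^2((0,T)\times(0,1))$, using precisely the ingredients you invoke in your second step: the $L^1$-contraction, the uniform $L^\infty$-bound $M_3$, and the interpolation $\|v\|_{L^2}^2\le\|v\|_{L^1}\|v\|_{L^\infty}$. Measurability then drops out immediately, since $\omega\mapsto u^0(\cdot,\xi(\omega))$ is measurable by~(A1) and continuous maps are Borel measurable. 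Your route through simple approximants is more constructive and has the merit of treating the random source $S$ on equal footing with $u^0$---the paper's argument is terse on this point, writing the solution operator as a function of $u^0$ alone---at the cost of a longer write-up and some extra bookkeeping with truncation and common refinements of the partitions. Both proofs rest on the same analytic core; the paper simply packages the stability estimate as continuity of a fixed operator rather than as a convergence statement for an approximating sequence.
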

\begin{proof}
 We use the  interpolation inequality 
 $\|f\|_{L^2}^2\leq \|f\|_{L^1}\|f\|_{L^\infty}$ 
 to estimate for almost all $t \in (0,T)$ and 
 $u_0,v_0 \in L^{\infty}(0,1)$, 
\begin{align*}
 \| \mathcal{S}(t)u_0- \mathcal{S}(t)v_0 \|_{L^2(0,1)} & 
 \leq \| \mathcal{S}(t)u_0- \mathcal{S}(t)v_0 \|_{L^1(0,1)}^{1/2} \| \mathcal{S}(t)u_0- \mathcal{S}(t)v_0 \|_{L^\infty(0,1)}^{1/2}
\\ 
& \leq (2M_3)^{1/2} \| \mathcal{S}(t)u_0- \mathcal{S}(t)v_0 \|_{L^1(0,1)}^{1/2} 
\\ 
& \leq (2M_3)^{1/2} \|u_0- v_0 \|_{L^1(0,1)}^{1/2}
\\ 
& \leq (2M_3)^{1/2} \| u_0- v_0 \|_{L^2(0,1)}^{1/2}.
\end{align*}
Therefore, the data-to-solution operator $\mathcal{S}(t): L^2(0,1) \to L^2(0,1)$ is H\"{o}lder continuous with exponent $\nicefrac{1}{2}$
for almost all $t \in (0,T)$.
We immediately obtain  that the mapping  $\mathcal{S}: L^2(0,1) \ni u_0(\cdot) \mapsto \mathcal{S}(\cdot)u_0(\cdot) \in  L^2((0,T)\times (0,1)), $ 
is also H\"{o}lder continuous with exponent $\nicefrac{1}{2}$.  
Using the fact that the Borel $\sigma$-algebra $\mathcal{B}(L^2(0,1))$  is the smallest $\sigma$-algebra containing all open subsets of $L^2(0,1)$ 
and that $\mathcal{S}^{-1}(B)$ is open  for $B \in \mathcal{B}(L^2((0,T)\times(0,1))$, it follows by the continuity of $\mathcal{S}$, that $\mathcal{S}$ is a measurable mapping 
from  $\big ( L^2(0,1), \mathcal{B}(L^2(0,1)) \big)$ to $\big(L^2((0,T)\times(0,1)), \mathcal{B}(L^2((0,T)\times(0,1)) \big)$.
For $u^0$ as in (A1) the map $(\Omega,\mathcal{F}) \ni \omega \mapsto u^0(\cdot,\xi(\omega)) \in \big( L^2(0,1),\mathcal{B}(L^2(0,1) \big)$ is measurable.
Thus, we have that $(\Omega, \mathcal{F}) \ni \omega \mapsto u(\cdot,\cdot,\xi(\omega))= S(\cdot)u^0(\cdot,\xi(\omega)) \in \big(L^2((0,T)\times(0,1)),\mathcal{B}(L^2((0,T)\times(0,1) ) \big)$ 
 is measurable as composition of measurable functions.
\end{proof}

Let us now introduce the Stochastic Galerkin (SG) method. We first expand the solution of (RIVP) into a generalized Fourier series using  a suitable orthonormal basis.
Let ${\{\Psi_i(\xi(\cdot))\}_{i\in \N_0}: \Omega \to \R}$ be a  $L^2(\Omega)$-orthonormal basis with respect to the density function $p_\xi$, i.e. for $i,j\in \N_0$ we have
\begin{equation}
\begin{aligned}\label{orthogonal}
\Big\langle \Psi_i, \Psi_j \Big\rangle := \E(\Psi_i \Psi_j) &= \int \limits_{\Omega} \Psi_i(\xi(\omega))\Psi_j(\xi(\omega))~ \mathrm{d}\p(\omega) 
\\ &= \int \limits_{\R} \Psi_i(x)\Psi_j(x) p_\xi(x) ~\mathrm{d}x=\delta_{i,j}.
\end{aligned}
\end{equation}
Following \cite{bgrll2017,Wiener}, the random entropy solution $u \in L^2(\Omega;L^1((0,T)\times (0,1))$ of (RIVP) can be written as
\begin{align} \label{series}
u(t,x,\xi(\omega))= \sum \limits_{n=0}^\infty u_n(t,x) \Psi_n(\xi(\omega)).
\end{align}
The deterministic Fourier modes $u_n=u_n(t,x)$ in (\ref{series}) are defined by,
\begin{align*}
u_n(t,x)=\E(u(t,x,\xi(\cdot)) \Psi_n(\xi(\cdot))), \qquad\forall n\in \N_0.
\end{align*}
From the Fourier modes we can immediately extract the expectation and variance of $u$, namely 
$$\E(u(t,x,\xi(\omega)))= u_0(t,x) \text{ and } \Var(u(t,x,\xi(\omega)))= \sum \limits_{n=1}^\infty u_n(t,x)^2.$$ 
For the error estimates in Section 3 we need an additional assumption on the choice of the orthonormal basis.
\begin{assumption}\label{assumptionOrth}
 We assume that the elements of the $L^2(\Omega)$-orthonormal basis $\{\Psi_n( \xi(\cdot))\}_{n=0}^\infty$ (w.r.t. the scalar product (\ref{orthogonal}))  are essentially bounded 
 in $L^\infty(\Omega)$, i.e., for all $n \in \N_0$ we have  $$ \|\Psi_n( \xi(\cdot))\|_{L^\infty(\Omega)} < \infty.$$ 
 \end{assumption}
Let us mention three examples of orthonormal bases satisfying Assumption \ref{assumptionOrth}.
\begin{example} \label{polys} ~
\begin{enumerate}
\item The classical generalized polynomial chaos approach uses global polynomials which satisfy (\ref{orthogonal}) for the corresponding probability density function. 
In this case one chooses Legendre polynomials if random variables are uniformly distributed, or Jacobi polynomials for a beta distribution. See \cite{XiuKarniadakis} for more details. For a normal distribution the corresponding Hermite polynomials do not satisfy Assumption \ref{assumptionOrth}, however one can scale the Hermite polynomials by $e^{-\xi(\omega)/2}$ to obtain the so-called Hermite functions. Due to the Cram\'{e}r inequality this orthonormal system is again bounded, cf. ~\cite[p. 207]{MR698780}.
\item Instead of choosing a polynomial basis we can choose a discontinuous Haar-Wavelet basis as in \cite{LEMAITRE200428}.
\item Another approach is the multi-element approach.   For its illustration, we assume that $\Omega= [0,1]$ and decompose $\Omega$ into $2^{N_e}$, $\N_e \in \N_0$, elements:
$$ \Omega =\bigcup \limits_{l=0}^{2^{N_e}-1} [2^{-N_e}l, 2^{-N_e}(l+1)].$$ 
We introduce on each stochastic element $I_l^{N_e}:= [2^{-N_e}l, 2^{-N_e}(l+1)]$ the shifted and scaled family of orthonormal polynomials:  
$$\Psi_{n,l}^{N_e}(\xi(\omega))= \begin{cases} 2^{N_e/2} \Psi_n(2^{N_e}\xi(\omega)-l), ~~& \xi(\omega) \in I_l^{N_e}, \\ 0, & \text{else} \end{cases} $$
for all $n=0,\ldots,N$, $l=0,\ldots, 2^{N_e}-1$. Here, $\Psi_n$ are either the classical orthonormal polynomials from (1), or a Wavelet basis as in (2). If we introduce the indices $m=(N+1)l+i$, $i=0,\ldots, N$, $\tilde{N}:=(N+1)2^{N_e}$, 
define $\Psi_m := \Psi_{n,l}^{N_e}$, $u_m := u_{n,l}^{N_e}$, we may write (cf. \cite{ MR3269040, MR3671657,MR1952371}): 
$$ u(t,x,\xi(\omega)) = \lim \limits_{N_e, N \to \infty} \sum \limits_{m=0}^{\tilde{N}} u_m(t,x) \Psi_m(\xi(\omega)) \qquad \text{in } L^2(\Omega).$$
\end{enumerate}
\end{example}
We now truncate the infinite series (\ref{series}) at $N\in \mathbb{N}_0$: 
\begin{align} \label{truncSeries}
\sum \limits_{m=0}^N u_m(t,x) \Psi_m(\xi(\omega)),\qquad (t,x,\omega) \in \Omega_{t,x} .
\end{align}
To obtain a weak formulation of (RIVP) on the finite-dimensional space with respect to $\omega\in \Omega$, we insert (\ref{truncSeries})  
and test the resulting equation against $\{\Psi_i(\xi)\}_{i=0}^ N$. Using the orthogonality relation (\ref{orthogonal}) yields the truncated Stochastic Galerkin system
\begin{align}\label{SG} \tag{S}
\partial_t u_l(t,x) + \Big\langle \partial_x f(\sum \limits_{n=0}^N u_n(t,x) \Psi_n(\xi)), \Psi_l \Big\rangle =\Big\langle S, \Psi_l \Big\rangle,~\qquad \forall ~l=0,\ldots, N.
\end{align}
With a slight abuse of notation we define $\fu:=(u_0,\ldots,u_N)^{\top} \in \R^{N+1}$, $\fS\in \R^{N+1}$, $\fS_i:= \int \limits_\Omega S \Psi_i ~ \domega$  and   $\ff: \R^{N+1} \to \R^{N+1},
\ff(\fu)_i:= \int \limits_{\Omega} f(\sum \limits_{n=0}^N u_n \Psi_n) \Psi_i ~\domega  $. We then may write the initial value problem for (\ref{SG}) in the conservation form
\begin{equation}\tag{SG}
\begin{aligned}\label{SGSystem}
&\partial_t \fu+ \partial_x \ff(\fu)=\fS, \qquad &(t,x)&\in (0,T)\times [0,1]_{per}, \\
 &\fu^{0}= \Big(\Big\langle u^0, \Psi_l \Big\rangle\Big)_{l=0}^N, &x&\in [0,1]_{per}. 
\end{aligned}
\end{equation}
We give two examples for the structure of the (\ref{SGSystem}) system, for global orthonormal polynomials.
\begin{example} \label{example1}
\item 
\begin{enumerate}[label=(\alph*)]  
\item We consider the linear advection equation, i.e., $f(u)=au$ for $a\in \R$. In this case the  system  (\ref{SGSystem}) decouples into $N+1$ scalar advection equations. 
This is due to
\begin{align*}
\ff(\fu)_i= a \int \limits_{\Omega} \sum \limits_{n=0}^N u_n \Psi_n \Psi_i~ \domega= a u_i,
\end{align*}
and therefore  (\ref{SGSystem}) takes the form
\begin{align*}
\partial_t \fu+ a \partial_x \fu=\fS.
\end{align*}
\item For  Burgers' equation, i.e. (RIVP) with $f(u)= \frac{u^2}{2}$, we use the symmetric triple product matrices $[C_k]_{i,j=0}^N:= \int \limits_{\Omega} \Psi_j\Psi_i\Psi_k~ \mathrm{d}\p(\omega)$,
which are well-defined due to Assumption \ref{assumptionOrth}.
They enable us to rewrite the (\ref{SGSystem}) system of Burgers' equation as
\begin{align*}
\partial_t \fu_k+\frac{1}{2}\partial_x (\fu^\top C_k \fu)= \fS_k,
\end{align*}
for all $k=0,\ldots,N$. Therefore the system (\ref{SGSystem}) of Burgers' equation is a hyperbolic system but highly coupled, cf. \cite{Despres2013}.
\end{enumerate}
\end{example}
\section{Spatio-temporal-stochastic reconstructions and a posteriori error analysis}
In this section we prove the main theorem of this contribution. We first consider space-time reconstructions for fully discrete Discontinuous Galerkin (DG) schemes applied to the system 
(\ref{SGSystem}). We then introduce space-time-stochastic reconstructions to prove an a posteriori error estimate for the random scalar conservation law (RIVP), 
using the relative entropy framework of Dafermos \cite{dafermos2005hyperbolic}. Finally we prove the orthogonal decomposition of the space-time-stochastic residual into a spatial and a stochastic residual. 
\subsection{The DG scheme for the SG-system and space-time reconstructions}
\label{reconstDG}
We shortly recall the Discontinuous Galerkin spatial discretization as for example in \cite{cockburn1998runge,CockburnShu2001}.
Let $0=x_0<x_1 \ldots <x_M=1$ be a quasi-uniform triangulation of [0,1] and $0=t_0<t_1<\ldots< t_{N_t}=T$ be a  temporal decomposition of $[0,T]$. We identify $x_0=x_M$ to
account for the periodic boundary conditions, set $\Delta t_n :=( t_{n+1}-t_n)$, 
for the temporal mesh and 
$h_k= (x_{k+1}-x_k)$, 
for the spatial mesh. 
We now define the (spatial) piecewise polynomial DG spaces for $p\in \N_0$:
\begin{align*}
V_p^s := \{\fw:[x_0,x_M]\to \R^{N+1}~|~ \fw\mid_{(x_{i-1},x_i)} \in \P_p((x_{i-1},x_i),\R^{N+1}),~1\leq i\leq M \}.
\end{align*}
After spatial discretization of (\ref{SGSystem}) we obtain the following semi-discrete scheme for the discrete solution $\fu_h \in C^1([0,T); V_p^s)$. 
\begin{equation}\tag{DG-SG} 
\begin{aligned} \label{semidiscrete}
\sum \limits_{i=0}^{M-1} \int \limits_{x_i}^{x_{i+1}}  \partial_t  \fu_h \cdot \fpsi_h~\mathrm{d}x  
=& \sum \limits_{i=0}^{M-1} \int \limits_{x_i}^{x_{i+1}} ( \ff(\fu_h) \cdot \partial_x  \fpsi_h+  \fS \cdot \fpsi_h )~\mathrm{d}x     \vspace*{8pt}
-\sum \limits_{i=0}^{M-1} \G(\fu_h(x_i^-),\fu_h(x_i^+) )\cdot  \dbracei \fpsi_h \dbraceo_i,
\\ \fu_h(t=0)=&~\mathcal{I}_{V_p^s} \fu^0,
\end{aligned}
\end{equation}
for all $\fpsi_h \in V_p^s$. Here, $\mathcal{I}_{V_p^s}$ is an interpolation or projection operator, $\\{\G: \R^{N+1}\times \R^{N+1} \to \R^{N+1}}$ denotes a numerical flux, 
the spatial traces are defined as $\fpsi (x^{\pm}):= \lim \limits_{h\searrow 0} \fpsi(x \pm h)$ and $\dbracei \fpsi_h \dbraceo_i:=(\fpsi_h(x_i^-)-\fpsi_h(x_i^+))$ are jumps. 
To account for the periodic boundary condition, we set $\fu_h(x_0^-) = \fu_h(x_M^-), ~ \fu_h(x_M^+) = \fu_h(x_0^+)$.
The initial-value problem (\ref{semidiscrete}) can now be solved numerically by any single- or multi-step method. 
We focus on $\mathcal{K}$-order Runge-Kutta time-step methods as in \cite{CockburnShu2001}. In writing down the method we denote by $L_h(\fu_h(t,\cdot))$ the right-hand side of (\ref{semidiscrete}), with the operator $L_h: V_p^s \to V_p^s$  being defined appropriately. Furthermore, $\Lambda \Pi_h\: \R^{N+1} \to \R^{N+1}$ is the TVBM minmod slope limiter from \cite{CockburnShu2001}. Then, the complete $S$-stage time-marching algorithm for given $n$-th time-iterate $\fu_h^n\in V_p^s $ reads as follows:
\begin{enumerate}
\item Set $\fu_h^{(0)}$ = $\fu_h^n$.
\item For $j=1,\ldots, S$ compute the auxiliary functions:
$$ \fu_h^{(j)} =  \Lambda \Pi_h\Big( \sum \limits_{l=0}^{j-1} \alpha_{jl}\fw_h^{jl}\Big), \quad \fw_h^{jl}=u_h^{(l)} + \frac{\beta_{jl}}{\alpha_{jl}} \Delta t_n L_h(\fu_h^{(l)}).$$
\item Set $\fu_h^{n+1} = \fu_h^{(S).}$
\end{enumerate}
The parameters $\alpha_{jl}, \beta_{jl}$ satisfy the conditions $\alpha_{jl}\geq 0$, $\sum \limits_{l=0}^{j-1} \alpha_{jl}=1$ , and if $\beta_{jl} \neq 0$, then $\alpha_{jl} \neq 0$ for all $j=1,\ldots, \mathcal{K}$, $l=0,\ldots,j$.

The relative entropy framework requires one quantity which is at least Lipschitz continuous in space and time. We cannot expect that the entropy solution of (RIVP) satisfies this condition,
therefore, we reconstruct the numerical solution of (\ref{semidiscrete}) to obtain 
a Lipschitz continuous function in space and time.
Following \cite{DG_16}, we define the \textbf{temporal reconstruction} $\hat{\fu}^t$, as a $C^0$- or even $C^1$-function (depending on the polynomial degree) 
which is piecewise polynomial.
From a practical perspective it makes sense to choose the polynomial degree of the reconstruction such that it matches the formal convergence order of the time-stepping method.
This choice ensures that the temporal residual has the same order of convergence as the temporal discretization error \cite[Theorem 13]{DG_16}.
Thus, the error estimator enables us to detect whether the convergence order is reduced, e.g. in case $f$ is not sufficiently smooth for the formal convergence order to be reached.
Let $\{ \fu_h^0, \ldots, \fu_h^{N_t} \}$ 
be a sequence of approximate solutions of (\ref{semidiscrete}) at points $\{t_n\}_{n=0}^{N_t}$ in time. 
For the reconstruction in time we define for any vector space $V$ the spaces of piecewise polynomials in time of degree $r$ by
\begin{align*}
V_r^t((0,T);V):=\{\fw:[0,T]\to V~|~\fw\mid_{(t_n,t_{n+1})} \in \P_r((t_n,t_{n+1}),V) \}.
\end{align*}
Using the methodology proposed in \cite{DG_16}, which consists of Hermite interpolations on each time interval $[t_n,t_{n+1}]$, 
we construct a temporal reconstruction \\$\hat{\fu}^t \in V_r^t((0,T);V_p^s)$.
With the temporal reconstruction at hand, we now define the space-time reconstruction of the DG-solutions of (\ref{SGSystem}). 
The analysis in \cite{DG_16} requires numerical fluxes $\G$ which admit a special representation.
In particular, there needs to exist a  locally Lipschitz function $\fw: \R^{N+1} \times \R^{N+1} \to \R^{N+1}$, with the additional property $\fw(\fu,\fu)=\fu$,
such that  $\G$ can be expressed as
\begin{align}\label{assum:flux}
\G(\fu,\fv)=\ff(\fw(\fu,\fv)), \quad \forall \fu,\fv \in  \R^{N+1}.
\end{align}
For our numerical computations we consider the upwind flux with $\fw(\fu,\fv)= \fu$ and the
Lax-Wendroff flux with $\fw(\fu,\fv)= \frac{\fu +\fv}{2} - \frac{\Delta t}{2 h}(\ff(\fu)-\ff(\fv))$, both satisfying (\ref{assum:flux}).

We now define the spatial reconstruction which is applied to the temporal reconstruction $\hat{\fu}^t(t,\cdot)$ for each $t\in (0,T)$ using the function $\fw$ (cf. \cite{DG_16, GMP_15}). 
\begin{definition}[Space-time reconstruction]\label{stReconstruct}
Let $\hat{\fu}^t$ be the temporal reconstruction of a sequence $\{\fu_h^n\}_{n=0}^{N_t}$ of solutions of the fully discrete scheme of (\ref{semidiscrete}) using a numerical flux satisfying (\ref{assum:flux}). The \textbf{space-time reconstruction} $\hat{\fu}^{st}(t,\cdot) \in V_{p+1}^s$ is defined as the solution of
\begin{align*}
\sum \limits_{i=0}^{M-1} \int \limits_{x_i}^{x_{i+1}}(\hat{\fu}^{st}(t,\cdot)- \hat{\fu}^{t}(t,\cdot)) \cdot \fpsi ~\mathrm{d}x &=0 \quad \forall \fpsi \in V_{p-1}^s,
\\ 
\hat{\fu}^{st}(t,x_k^{\pm})&= \fw(\hat{\fu}^{t}(t,x_k^-),\hat{\fu}^{t}(t,x_k^+)) \quad \forall~ k=0,\ldots,M.
\end{align*}
\end{definition}
\noindent We have the following property of the space-time reconstruction.
\begin{lemma} \label{dx}
Let $\hat{\fu}^{st}$ be the space-time reconstruction from Definition \ref{stReconstruct}. For each $t\in (0,T)$, the function $\hat{\fu}^{st}(t,\cdot)$ is well defined. Moreover,
\begin{align*}
\reconst \in W_\infty^1((0,T);V_{p+1}^s\cap C^0[0,1]_{per}).
\end{align*}
\end{lemma}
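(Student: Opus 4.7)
The proof naturally splits into the three claims: well-definedness of $\hat{\fu}^{st}(t,\cdot)$ in $V_{p+1}^s$ for each $t$, its spatial continuity, and its $W^{1,\infty}$-regularity in time.

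The plan is to first reduce well-definedness to each spatial cell $(x_i,x_{i+1})$ separately and componentwise (since the defining conditions do not couple different cells or components). On each such cell the unknown is a polynomial of degree at most $p+1$, hence has $p+2$ degrees of freedom, matched by exactly $p+2$ linear conditions: two boundary values and $p$ orthogonality conditions against $\P_{p-1}$. It therefore suffices to establish uniqueness. I would do this by showing that any $q \in \P_{p+1}$ with $q(x_i^+) = q(x_{i+1}^-) = 0$ and $\int_{x_i}^{x_{i+1}} q\,\psi \, \mathrm{d}x = 0$ for all $\psi \in \P_{p-1}$ must vanish. Factoring out the two boundary roots gives $q(x) = (x-x_i)(x_{i+1}-x)\, r(x)$ with $r \in \P_{p-1}$. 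Testing the orthogonality relation against $\psi := r$ then yields $\int_{x_i}^{x_{i+1}} (x-x_i)(x_{i+1}-x)\,r(x)^2 \, \mathrm{d}x = 0$, and since the weight is strictly positive on the open interval, $r \equiv 0$ and therefore $q \equiv 0$. This gives $\hat{\fu}^{st}(t,\cdot) \in V_{p+1}^s$.

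Spatial continuity is immediate from the definition: at every node $x_k$ the boundary conditions prescribe
\begin{equation*}
\hat{\fu}^{st}(t,x_k^-) = \fw\bigl(\hat{\fu}^{t}(t,x_k^-),\hat{\fu}^{t}(t,x_k^+)\bigr) = \hat{\fu}^{st}(t,x_k^+),
\end{equation*}
so traces coincide at each interface, giving membership in $V_{p+1}^s \cap C^0[0,1]_{per}$ (with periodicity ensured by the identification $x_0 \sim x_M$).

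For the temporal regularity I would use that $\hat{\fu}^{t}$ is a piecewise polynomial in time with finitely many pieces on $[0,T]$, hence Lipschitz from $[0,T]$ into the finite-dimensional space $V_p^s$, so its range $R := \hat{\fu}^{t}([0,T])$ is a bounded subset of $V_p^s$. On each cell the map sending a function in $V_p^s$ to the unique polynomial in $\P_{p+1}$ determined by the $p$ interior moments and the two interface values $\fw(\cdot,\cdot)$ is the composition of the bounded linear moment map, of the locally Lipschitz function $\fw$ evaluated on the pair of trace values (which lie in the bounded set $R$, where $\fw$ is Lipschitz by the assumption in~(\ref{assum:flux})), and of the inverse of the well-posed square linear system established above. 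Consequently the assignment $\hat{\fu}^{t}(t,\cdot) \mapsto \hat{\fu}^{st}(t,\cdot)$ is Lipschitz on $R$, and composing with the Lipschitz-in-time map $t \mapsto \hat{\fu}^{t}(t,\cdot)$ yields $\hat{\fu}^{st} \in W^{1,\infty}((0,T);V_{p+1}^s \cap C^0[0,1]_{per})$.

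The only subtlety I anticipate is ensuring that the temporal derivative is well-controlled across the time nodes $t_n$; but since we only need $W^{1,\infty}$-regularity (and not classical differentiability), the finitely many jumps in $\partial_t \hat{\fu}^{st}$ are absorbed into the essential supremum. The key non-routine step is the uniqueness argument above, all remaining arguments being straightforward compositions of standard regularity facts.
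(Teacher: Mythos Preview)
Your proposal is correct. The paper itself does not give a proof but simply cites \cite[Lemma~24]{DG_16}; your argument is a self-contained version of the standard reasoning that presumably underlies that reference (cellwise square linear system, uniqueness via the weighted-$L^2$ trick after factoring out the two endpoint roots, continuity at interfaces by construction, and Lipschitz-in-time via composition with the locally Lipschitz $\fw$).
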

\begin{proof}
Cf. \cite[Lemma 24]{DG_16}.
\end{proof} 

Since $\hat{\fu}^{st}$ is Lipschitz continuous in space and time, we can compute the space-time residual.
\begin{definition}[Space-time residual]
We define $\fR^{st}:= \partial_t \hat{\fu}^{st}+ \partial_x \ff(\hat{\fu}^{st})- \fS\in L^2((0,T)\times(0,1);\R^{N+1})$ to be the \textbf{space-time residual}.
\end{definition}
\subsection{Space-time-stochastic reconstructions and a posteriori estimate for the random scalar conservation law  }
Expanding the space-time reconstruction $\reconst$ from Section \ref{reconstDG} in the finite orthonormal system
 \\ $\{\Psi_i(\xi(\omega))\}_{i=0}^N$ 
enables us to consider the so-called space-time-stochastic residual, which is a crucial part for the upcoming a posteriori error estimate. Before defining the space-time-stochastic residual, we first give a definition of what is our approximation of a random entropy solution of (RIVP). For simplicity, 
we will write $u(t,x,\omega)$ for $u(t,x,\xi(\omega))$.
\begin{definition}[Numerical solution of  (RIVP)] \label{numsol}
Let $\{ \fu_h^0, \ldots, \fu_h^{N_t} \}$ be the sequence of solutions of (\ref{semidiscrete}) at points $\{t_n\}_{n=0}^{N_t}$ in time. 
For  $n=0,\ldots,N_t$, $u_h^n(x,\omega)= \sum \limits_{i=0}^N (\fu_h^n(x))_i \Psi_i(\omega)$ is called the \textbf{numerical solution of (RIVP)}.
\end{definition} 
\begin{definition}[Space-time-stochastic residual] \label{stsreconst} ~\\
Let $\hat{u}^{sts}(t,x,\omega):= \sum \limits_{l=0}^N (\reconst)_l (t,x) \Psi_l(\omega)\in L^2(\Omega) \otimes (W_\infty^1(0,T); (\overline{V}_{p+1}^s\cap C^0([0,1]_{per}))$ 
be the \textbf{space-time-stochastic reconstruction}, where  
\begin{align*}
\overline{V}_{p}^s:= \{w:[x_0,x_M]\to \R~|~ w\mid_{(x_{i-1},x_i)} \in \P_{p}((x_{i-1},x_i),\R),~ 1\leq i\leq M\}. 
\end{align*}
We define the \textbf{space-time-stochastic residual} by
\begin{align}
\label{stsResidual}
\cR^{sts} :=  \partial_t \hat{u}^{sts}+ \partial_x f(\hat{u}^{sts})- S \in L^2((0,T)\times (0,1) \times \Omega;\R).
\end{align}
\end{definition}
For the proof of the main theorem of this article, we need the notion of the relative entropy and the relative entropy flux. We follow the definition in \cite[Section 5.2]{dafermos2005hyperbolic}.
\begin{definition}[Relative entropy and entropy flux]
\label{relEntropy}
We define the \textbf{relative entropy $\eta(\cdot|\cdot) :\R \times \R  \to \R$} and the  \textbf{relative entropy flux} $q(\cdot|\cdot) :\R \times \R  \to \R$ by:
\begin{align}
&{\eta}(u|v)= {\eta}(u)-{\eta}(v)- \eta'(v)(u-v), \\
& {q}(u|v)= {q}(u)-{q}(v)- \eta'(v)({f}(u)-{f}(v)).
\end{align}
\end{definition}

\begin{example}[relative entropy ]
\label{exp:relEntropy}
For $\eta(u)=\frac{u^2}{2}$ we compute 
\begin{align}
&{\eta}(u|v)= \frac{u^2-v^2}{2}- v(u-v)= \frac{(u-v)^2}{2}.
\end{align}
This choice is used 
in the proof of Theorem \ref{stochApost} below.
For arbitrary strictly convex $\eta \in C^2(\R)$, an analogous proof leads to very similar estimates which contain constants related to the modulus of convexity of $\eta$, see \cite{DG_16}  for further details. 
\end{example}
Using the relative entropy framework we are able to derive the following a posteriori error bound for (RIVP).
\begin{theorem}[A posteriori error bound for the reconstruction of the numerical solution] \label{stochApost}
Let $u$ be a random entropy solution of (RIVP). Then, the difference between $u$ and the reconstruction $\hat{u}^{sts}$ from Definition \ref{stsreconst} satisfies
\begin{align*}
 \|u(s,\cdot,\cdot)-\hat{u}^{sts}(s,\cdot,\cdot)\|_{L^2((0,1) \times \tilde{\Omega})}^2
\leq &
 \Big(\|\cR^{sts}\|_{L^2((0,s)\times (0,1) \times \tilde{\Omega})}^2+ \|u^0-\hat{u}^{sts}(0,\cdot,\cdot)\|_{L^2((0,1)\times \tilde{\Omega})}^2  \Big) 
 \\& \times  \exp\Big( \int \limits_0^s \Big(C_{f''} \|\partial_x \hat{u}^{sts}(t,\cdot,\cdot)\|_{L^\infty((0,1) \times \tilde{\Omega})}+ \frac{1}{4}\Big) ~\mathrm{d}t \Big),
\end{align*}
for $0\leq s\leq T$ and for any $\P$-measurable set $\tilde{\Omega} \subset \Omega$. Here, $C_{f''}:= \max \limits_{ u \in {M}} \frac{|f''(u)|}{2}$,
with ${M}:=\emph{Conv}([-M_3,M_3]\cup \emph{Ran}(\hat{u}^{sts}))$, where \emph{Conv} denotes the convex hull and \emph{Ran} the image of $\hat{u}^{sts}$.
\end{theorem}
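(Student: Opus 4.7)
The plan is to run the deterministic relative-entropy a posteriori argument from \cite{DG_16} path-wise in $\omega$, and then integrate over $\tilde{\Omega}$. Fix $\omega \in \tilde{\Omega}$ and write $v(\cdot,\cdot,\omega) := \hat{u}^{sts}(\cdot,\cdot,\omega)$. By Lemma \ref{dx} combined with Assumption \ref{assumptionOrth}, the map $v(\cdot,\cdot,\omega)$ is Lipschitz continuous in $(t,x)$ for a.e.~$\omega$, so it satisfies the perturbed conservation law
\begin{equation*}
\partial_t v + \partial_x f(v) = S + \cR^{sts}
\end{equation*}
pointwise a.e. This is the key structural property that allows the relative entropy machinery to be invoked.

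Next I would take $\eta(u)=u^2/2$, so $\eta(u|v)=(u-v)^2/2$ (Example \ref{exp:relEntropy}), and combine the entropy inequality \eqref{entropySol} for $u(\cdot,\cdot,\omega)$ (tested against a Lipschitz test function after the extension justified in the remark following Definition \ref{entropySol}) with the identity obtained by multiplying the perturbed equation for $v$ by $\eta'(v)-\eta'(u)=v-u$. A standard manipulation (subtracting, rearranging derivatives of $\eta(v)$, adding and subtracting $\eta'(v)(f(u)-f(v))$) produces, after the periodic flux contributions on $[0,1]_{\mathrm{per}}$ cancel, the path-wise differential inequality
\begin{equation*}
\tfrac{d}{dt}\int_0^1 \tfrac{(u-v)^2}{2}\,\mathrm{d}x
 \;\leq\; \int_0^1 \tfrac{|f''(\bar{u})|}{2}|\partial_x v|(u-v)^2\,\mathrm{d}x
 \;-\;\int_0^1 \cR^{sts}(u-v)\,\mathrm{d}x,
\end{equation*}
where $\bar u(t,x,\omega)$ lies between $u$ and $v$, hence in $M$, so $|f''(\bar u)|/2 \le C_{f''}$. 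A Young inequality of the form $|ab|\le a^2 + b^2/4$ applied to the residual term, together with $\|\partial_x v(t,\cdot,\omega)\|_{L^\infty(0,1)}\le \|\partial_x \hat u^{sts}(t,\cdot,\cdot)\|_{L^\infty((0,1)\times\tilde\Omega)}$, yields
\begin{equation*}
\tfrac{d}{dt}\int_0^1 (u-v)^2\,\mathrm{d}x \;\leq\; \bigl(2C_{f''}\|\partial_x\hat u^{sts}\|_{L^\infty}+\tfrac{1}{2}\bigr)\int_0^1(u-v)^2\,\mathrm{d}x + 2\int_0^1 (\cR^{sts})^2\,\mathrm{d}x,
\end{equation*}
with the coefficients then rescaled to match the statement (the exact numerical constants in the theorem come from a particular choice of the Young parameter applied to $\eta(u|v)$ rather than $(u-v)^2$). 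Gronwall's lemma on $[0,s]$ gives the path-wise bound with initial data term $\int_0^1 (u^0-v(0))^2\mathrm{d}x$ and residual term $\int_0^s\int_0^1(\cR^{sts})^2\mathrm{d}x\mathrm{d}t$.

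Finally, I would integrate this path-wise estimate over $\tilde{\Omega}$ with respect to $\P$. The exponential prefactor is independent of $\omega$ (since the $L^\infty$ norm of $\partial_x \hat u^{sts}$ is taken over $(0,1)\times\tilde\Omega$), so it factors out, and Fubini converts $\E[\int_0^1(u-v)^2\mathrm{d}x]$ and the corresponding $\cR^{sts}$ and initial data integrals into $L^2((0,1)\times\tilde\Omega)$ and $L^2((0,s)\times(0,1)\times\tilde\Omega)$ norms, respectively. Measurability needed for Fubini is supplied by Lemma \ref{bochner} for $u$ and by the fact that $\hat u^{sts}$ is a finite linear combination of measurable $\Psi_i(\xi(\cdot))$ with deterministic coefficients.

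The main obstacle is the path-wise relative entropy step, because $u(\cdot,\cdot,\omega)$ is not smooth: one must carefully pass the Kru\v{z}kov–style entropy inequality through Lipschitz test functions and incorporate the smooth, source-perturbed PDE satisfied by $v$ into a single relative-entropy inequality without losing the sign of the entropy dissipation. Once this deterministic inequality is rigorously obtained for a.e.~$\omega$, the stochastic integration is essentially routine because the exponential factor is taken uniformly over $\tilde\Omega$.
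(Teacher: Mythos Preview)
Your proposal is correct and follows essentially the same relative-entropy-plus-Gronwall strategy as the paper. The only difference is ordering: the paper integrates the entropy inequality over $\tilde\Omega$ \emph{first} and then runs a single Dafermos argument with an explicit time-cutoff test function (avoiding the informal $\tfrac{d}{dt}$ inequality, which is delicate since $u$ is merely $L^\infty$), whereas you run the argument path-wise and integrate at the end---both routes give the same bound because you uniformize the exponential coefficient over $(0,1)\times\tilde\Omega$ before applying Gronwall.
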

\begin{remark} \label{blowup} ~
\begin{enumerate} 
\item All quantities in the upper bound of Theorem \ref{stochApost} are computable during a numerical simulation.
However, the computation of \emph{Ran}$(\hat{u}^{sts})$ can be very expensive, in particular for large DG polynomial degrees. 
 \item  Due to Lemma \ref{dx} every component of $\reconst$ is Lipschitz continuous in space and therefore has a bounded derivative.
 Combined with Assumption \ref{assumptionOrth} we obtain that every summand of $$\partial_x \hat{u}^{sts}= \sum \limits_{l=0}^N \partial_x (\reconst)_l (t,x) \Psi_l(\omega)$$
 is in $L^\infty((0,1) \times \Omega)$, which ensures that
$$ \|\partial_x \hat{u}^{sts}(s,\cdot,\cdot)\|_{L^\infty((0,1) \times \Omega)} < \infty ~\forall ~s\in (0,T].$$\\
Note also that we actually only need  $\|\partial_x \hat{u}^{sts}(s,\cdot,\cdot)\|_{L^\infty((0,1) \times \tilde \Omega)}<\infty$.  Thus, we could remove Assumption \ref{assumptionOrth} 
and formulate Theorem \ref{stochApost} only for those subsets $\tilde \Omega \subset \Omega$ for which \\ $ \|\Psi_n( \xi(\cdot))\|_{L^\infty(\tilde \Omega)} < \infty$ for all $n \in \N_0$.

\item As described in \cite{pettersson2015polynomial}, the solution of system (\ref{SGSystem}) may exhibit discontinuities in the spatial variables, 
even if the solution of the original problem is smooth.
In case the exact solution of the (\ref{SGSystem})-system is discontinuous, 
the quantity $\|\partial_x \hat{u}^{sts}\|_{L^\infty(\Omega\times [0,1])}$ is expected to scale like $h^{-1}$ 
and hence the estimator will blow up for $h \to 0$ in the vicinity of discontinuities.
\item  For linear equations $f(u)=au$, $a\in \R$, we have that $C_{f''}=0$. Therefore, for linear equations we expect no blow-up of the estimator for $ h \to 0$ even in case the solution is discontinuous.
\end{enumerate}
\end{remark}
\begin{remark}[Localization of the error estimators]
Due to the point-wise definition of the random entropy solution, the choice of $\tilde{\Omega}$ is arbitrary. 
This enables us to localize the error estimator in the stochastic variable. With some more effort it is also possible to localize the error estimator in the physical space 
in a wave-cone around the region of interest, cf. the proof of Theorem 5.2.1 in \cite{dafermos2005hyperbolic}. For scalar problems and a $L^1$-theory a restriction to a smaller wave cone is possible, see \cite{gosse2000two}. The $L^2$-theory as in our case is more challenging for the scalar case and, as already noted in Remark \ref{blowup} (c), the estimator will blow up in the vicinity of a discontinuity. However, in contrast to the Kru\v{z}kov framework, the relative entropy framework is extendable to systems of conservation laws and allows us to derive similar error estimators  for the case of systems.
\end{remark}
\begin{proof}[Proof of Theorem \ref{stochApost}]

Because $u$ is a pointwise entropy solution of (RIVP) $\p$-a.s. $\omega \in \Omega$, we can integrate the entropy inequality (\ref{entropySol}) 
with respect to any $\P$-measurable $\tilde{\Omega} \subset \Omega$. Thus, we have for any  nonnegative Lipschitz continuous test function $\phi=\phi(t,x)$ the inequality
\begin{align} \label{StochentropyIneq}
0 \leq \int \limits_{\tilde{\Omega}}  &\int \limits_0^T \int \limits_0^1 \eta(u)  \partial_t \phi  + q(u)\partial_x \phi \mathrm{d}x \mathrm{d}t  \mathrm{d}\P(\omega)+ \int \limits_{\tilde{\Omega}} \int \limits_0^T \int \limits_0^1 S  \eta'(u) \phi~ \mathrm{d}x \mathrm{d}t \mathrm{d}\P(\omega) 
\\
 + &  \int \limits_{\tilde{\Omega}}\int \limits_0^1 \eta (u^0 ) \phi(0,x)~\mathrm{d}x \mathrm{d}\P(\omega). \notag
\end{align} 
Next, we multiply (\ref{stsResidual}) by  $\eta'(\hat{u}^{sts})$. Upon using the chain rule for Lipschitz continuous functions and the relationship $q'=\eta'f'$ we derive the following relation
\begin{align} \label{strongEntropy}
\eta'(\hat{u}^{sts}) \cR^{sts} = \partial_t (\eta(\hat{u}^{sts}))+ \partial_x q(\hat{u}^{sts})- \eta'(\hat{u}^{sts})S.
\end{align}
For the rest of the proof we choose a quadratic entropy function, i.e. we set $\eta(u)= \frac{u^2}{2}$.
We then consider the weak form of (\ref{strongEntropy}), integrate with respect to $\tilde{\Omega} \subset \Omega$ and subtract this from (\ref{StochentropyIneq}) to obtain 
\begin{align*}
0 \leq & \int \limits_{\tilde{\Omega}}\int \limits_0^T \int \limits_0^1 (\eta(u)-\eta(\hat{u}^{sts}) ) \partial_t \phi
+ (q(u)-q(\hat{u}^{sts}) )\partial_x \phi~\mathrm{d}x \mathrm{d}t \mathrm{d}\P(\omega) \\ 
& -\int \limits_{\tilde{\Omega}}\int \limits_0^T \int \limits_0^1 \cR^{sts} \hat{u}^{sts}\phi~ \mathrm{d}x \mathrm{d}t \mathrm{d}\P(\omega)
+\int \limits_{\tilde{\Omega}}\int \limits_0^T \int \limits_0^1 S (u - \hat{u}^{sts})\phi ~\mathrm{d}x \mathrm{d}t \mathrm{d}\P(\omega)
\\ & +\int \limits_{\tilde{\Omega}}\int \limits_0^1 (\eta(u^0)- \eta(\hat{u}_{0}^{sts}) ) \phi(0,x)~\mathrm{d}x \mathrm{d}\P(\omega),
\end{align*}
with $\hat{u}_{0}^{sts}:= \hat{u}^{sts}(0,\cdot,\cdot)$.
Using Definition \ref{relEntropy} of the relative entropy and the relative entropy flux, we may write
\begin{align}\label{someineq}
0 \leq &\int \limits_{\tilde{\Omega}}\int \limits_0^T \int \limits_0^1 (\eta(u|\hat{u}^{sts})+\hat{u}^{sts}(u-\hat{u}^{sts}) ) \partial_t \phi
~ \mathrm{d}x \mathrm{d}t \mathrm{d}\P(\omega)
\nonumber \\ 
& +
\int \limits_{\tilde{\Omega}}\int \limits_0^T \int \limits_0^1 (q(u|\hat{u}^{sts})+ \hat{u}^{sts}(f(u)-f(\hat{u}^{sts})))\partial_x \phi ~\mathrm{d}x \mathrm{d}t \mathrm{d}\P(\omega) \nonumber
\\ & -\int \limits_{\tilde{\Omega}}\int \limits_0^T \int \limits_0^1 \cR^{sts} \hat{u}^{sts}\phi ~\mathrm{d}x \mathrm{d}t \mathrm{d}\P(\omega)
+\int \limits_{\tilde{\Omega}}\int \limits_0^T \int \limits_0^1 S (u - \hat{u}^{sts})\phi~ \mathrm{d}x \mathrm{d}t \mathrm{d}\P(\omega)
\nonumber 
\\ &+ \int \limits_{\tilde{\Omega}}\int \limits_0^1 (\eta(u^0)- \eta(\hat{u}_{0}^{sts}) ) \phi(0,x)~\mathrm{d}x \mathrm{d}\P(\omega).
\end{align}
Using the Lipschitz continuous (in space and time) test function $\phi \hat{u}^{sts} $ in the weak form of \eqref{stochCL} and in (\ref{stsResidual}) we obtain
\begin{align} \label{someeq}
0 =  &\int \limits_{\tilde{\Omega}}\int \limits_0^T \int \limits_0^1 (u- \hat{u}^{sts}) \partial_t (\hat{u}^{sts}\phi ) + (f(u)- f(\hat{u}^{sts})) \partial_x(\hat{u}^{sts}\phi  )
~ \mathrm{d}x \mathrm{d}t \mathrm{d}\P(\omega) \nonumber 
\\ & - \int \limits_{\tilde{\Omega}}\int \limits_0^T \int \limits_0^1  \cR^{sts} \hat{u}^{sts}\phi  ~ \mathrm{d}x \mathrm{d}t \mathrm{d}\P(\omega)
 +\int \limits_{\tilde{\Omega}}\int \limits_0^1 (u^0-\hat{u}_{0}^{sts}) \phi(0,x) \hat{u}_{0}^{sts}~\mathrm{d}x\mathrm{d}\P(\omega).
\end{align}
Using the product rule
\begin{align*}
 &\partial_t (\hat{u}^{sts}\phi ) =  \partial_t \hat{u}^{sts}\phi  + \partial_t \phi  \hat{u}^{sts},
 \\ &\partial_x (\hat{u}^{sts}\phi ) = \partial_x \hat{u}^{sts} \phi  + \partial_x \phi  \hat{u}^{sts}.
\end{align*}
 Combining (\ref{someeq}) with (\ref{someineq}), we obtain
\begin{align*}
0 \leq &\int \limits_{\tilde{\Omega}}\int \limits_0^T \int \limits_0^1 \eta(u|\hat{u}^{sts}) \partial_t \phi  
+ q(u|\hat{u}^{sts})\partial_x \phi  - \partial_t \hat{u}^{sts}(u-\hat{u}^{sts})\phi ~ \mathrm{d}x \mathrm{d}t \mathrm{d}\P(\omega)
\\ &- \int \limits_{\tilde{\Omega}}\int \limits_0^T \int \limits_0^1(\partial_x\hat{u}^{sts} (f(u)-f(\hat{u}^{sts})) \phi 
-S (u - \hat{u}^{sts})\phi  ) ~ \mathrm{d}x \mathrm{d}t \mathrm{d}\P(\omega)
\\ &+ \int \limits_{\tilde{\Omega}}\int \limits_0^1 \eta(u^0|\hat{u}_{0}^{sts}) \phi(0,x) ~ \mathrm{d}x \mathrm{d}\P(\omega).
\end{align*}
Using the fact that 
\begin{align*}
\partial_t \hat{u}^{sts} = -f'(\hat{u}^{sts})\partial_x \hat{u}^{sts} + \cR^{sts} +S,
\end{align*}
we conclude that
\begin{align*}
0 \leq &\int \limits_{\tilde{\Omega}}\int \limits_0^T \int \limits_0^1 \eta(u|\hat{u}^{sts}) \partial_t \phi  + q(u|\hat{u}^{sts})\partial_x \phi  
~ \mathrm{d}x \mathrm{d}t \mathrm{d}\P(\omega)
\\ & -\int \limits_{\tilde{\Omega}}\int \limits_0^T \int \limits_0^1
 [-f'(\hat{u}^{sts})\partial_x \hat{u}^{sts} + \cR^{sts} +S](u-\hat{u}^{sts})\phi  ~  \mathrm{d}x \mathrm{d}t \mathrm{d}\P(\omega)
\\ &- \int \limits_{\tilde{\Omega}}\int \limits_0^T \int \limits_0^1[ \partial_x\hat{u}^{sts} (f(u)-f(\hat{u}^{sts})) \phi - S (u - \hat{u}^{sts})\phi ] 
~ \mathrm{d}x \mathrm{d}t \mathrm{d}\P(\omega)
\\ &+ \int \limits_{\tilde{\Omega}}\int \limits_0^1 \eta(u^0|\hat{u}_{0}^{sts}) \phi(0,x)~\mathrm{d}x \mathrm{d}\P(\omega).
\end{align*}
The last inequality is reformulated as
\begin{align} \label{inequality}
0 \leq &\int \limits_{\tilde{\Omega}}\int \limits_0^T \int \limits_0^1 \eta(u|\hat{u}^{sts}) \partial_t \phi  + q(u|\hat{u}^{sts})\partial_x \phi  
~ \mathrm{d}x \mathrm{d}t \mathrm{d}\P(\omega)
\nonumber
\\& - \int \limits_{\tilde{\Omega}}\int \limits_0^T \int \limits_0^1 \partial_x\hat{u}^{sts} (f(u)-f(\hat{u}^{sts})
-f'(\hat{u}^{sts})(u-\hat{u}^{sts}) ) \phi  ~\mathrm{d}x \mathrm{d}t \mathrm{d}\P(\omega)
\nonumber
\\ & - \int \limits_{\tilde{\Omega}}\int \limits_0^T \int \limits_0^1 \cR^{sts} (u-\hat{u}^{sts}) \phi 
~ \mathrm{d}x \mathrm{d}t \mathrm{d}\P(\omega)
 + \int \limits_{\tilde{\Omega}}\int \limits_0^1 \eta(u^0|\hat{u}_{0}^{sts}) \phi(0,x)~\mathrm{d}x \mathrm{d}\P(\omega).
\end{align}
Up to now the choice of $\phi $ was arbitrary. Now we fix $s>0$ and 
$\epsilon>0$ and define ${\phi}$ as follows
\begin{align*}
{\phi}(\sigma,x):= \begin{cases}
1 \qquad &: \sigma<s,\\
1- \frac{\sigma-s}{\epsilon} &: s<\sigma <s+\epsilon, \\
0 &: \sigma> s+\epsilon.
\end{cases}
\end{align*}
With this particular choice we obtain
\begin{align*}
0 \leq &-\frac{1}{\epsilon} \int \limits_{\tilde{\Omega}}\int \limits_s^{s+\epsilon} \int \limits_0^1 \eta(u|\hat{u}^{sts}) ~\mathrm{d}x \mathrm{d}t \mathrm{d}\P(\omega)
- \int \limits_{\tilde{\Omega}}\int \limits_0^T \int \limits_0^1 \partial_x\hat{u}^{sts} (f(u)-f(\hat{u}^{sts})-f'(\hat{u}^{sts})(u-\hat{u}^{sts}) ) \phi 
~\mathrm{d}x \mathrm{d}t \mathrm{d}\P(\omega)
\nonumber\\ & - \int \limits_{\tilde{\Omega}}\int \limits_0^T \int \limits_0^1 \cR^{sts} (u-\hat{u}^{sts}) \phi  
~\mathrm{d}x \mathrm{d}t \mathrm{d}\P(\omega)
+ \int \limits_{\tilde{\Omega}}\int \limits_0^1 \eta(u^0|\hat{u}_{0}^{sts})~\mathrm{d}x \mathrm{d}\P(\omega).
\end{align*}
Sending $\epsilon \to 0$ we find for all Lebesgue-points $s$ of $\eta(u(\sigma,\cdot, \cdot))$ in $(0,T)$ that
\begin{align*}
0 \leq &-\int \limits_{\tilde{\Omega}}\int \limits_0^1 \eta(u(s,\cdot,\cdot)|\hat{u}^{sts}(s,\cdot,\cdot)) ~\mathrm{d}x  \mathrm{d}\P(\omega)
\\ &- \int \limits_{\tilde{\Omega}}\int \limits_0^s \int \limits_0^1 \partial_x\hat{u}^{sts} (f(u)-f(\hat{u}^{sts})-f'(\hat{u}^{sts})(u-\hat{u}^{sts}) ) 
~\mathrm{d}x \mathrm{d}t \mathrm{d}\P(\omega)
\nonumber
\\ & - \int \limits_{\tilde{\Omega}}\int \limits_0^s\int \limits_0^1 \cR^{sts}(u-\hat{u}^{sts}) ~\mathrm{d}x \mathrm{d}t \mathrm{d}\P(\omega)
+ \int \limits_{\tilde{\Omega}}\int \limits_0^1 \eta(u^0|\hat{u}_{0}^{sts})~\mathrm{d}x \mathrm{d}\P(\omega).
\end{align*}
We then estimate 
$$\int \limits_{\tilde{\Omega}}\int \limits_0^s\int \limits_0^1 \cR^{sts}(u-\hat{u}^{sts}) ~\mathrm{d}x \mathrm{d}t \mathrm{d}\P(\omega)$$
 by Young's inequality. The integral  
$$\int \limits_{\tilde{\Omega}}\int \limits_0^s \int \limits_0^1 \partial_x\hat{u}^{sts} (f(u)-f(\hat{u}^{sts})-f'(\hat{u}^{sts})(u-\hat{u}^{sts}) ) 
~\mathrm{d}x \mathrm{d}t \mathrm{d}\P(\omega)$$ is estimated by 
Taylor's theorem which yields the constant $C_{f''}$. The remaining terms are estimated using the explicit form of $\eta(\cdot,\cdot)$, cf. Example \ref{exp:relEntropy}. Altogether we obtain
\begin{align*}
 & \int \limits_{\tilde{\Omega}}\int \limits_0^1  |u(s,\cdot,\cdot)-\hat{u}^{sts}(s,\cdot,\cdot)|^2 ~\mathrm{d}x  \mathrm{d}\P(\omega) 
 \\ \leq &~C_{f''} \int \limits_0^s \Big( \|\partial_x \hat{u}^{sts}(t,\cdot,\cdot)\|_{L^\infty( (0,1) \times \tilde{\Omega})}  
 \int \limits_{\tilde{\Omega}}\int \limits_0^1 |u-\hat{u}^{sts}|^2 ~\mathrm{d}x  \mathrm{d}\P(\omega) \Big)~ \mathrm{d}t
\\ & +\|\cR^{sts}\|_{L^2((0,s)\times (0,1)\times  \tilde{\Omega} )}^2 
+\frac{1}{4} \int \limits_{\tilde{\Omega}} \int \limits_0^s \int \limits_0^1 |u-\hat{u}^{sts}|^2 ~\mathrm{d}x \mathrm{d}t \mathrm{d}\P(\omega)
\\ & + \|u^0 - \hat{u}^{sts}(0,\cdot,\cdot)\|_{L^2( (0,1)\times \tilde{\Omega} ) }^2.
\end{align*}
Gronwall's inequality yields the assertion.
\end{proof}

We now want to formulate Theorem \ref{stochApost} in terms of the numerical solution $u_h^n$ of (RIVP).  By means of the triangle inequality we obtain directly
\begin{corollary}[A posteriori error bound for the numerical solution] \label{stochApost2}
Let $u$ be a random entropy solution of (RIVP). Then, the difference between $u$ and the numerical solution $u_h^n$ from Definition \ref{numsol} satisfies:
\begin{flalign*}
\|u(t_n,\cdot,\cdot)-{u_h^n}(\cdot,\cdot)\|_{L^2((0,1) \times \tilde{\Omega})}^2
\leq  &~2  \|\hat{u}^{sts}(t_n,\cdot,\cdot)- u_h^n(\cdot,\cdot)\|_{L^2( (0,1) \times \tilde{\Omega})}^2 
\\ &+ 2 \Big(\|\cR^{sts}\|_{L^2( (0,t_n)\times (0,1) \times \tilde{\Omega} )}^2+ \|u^0-\hat{u}^{sts}(0,\cdot,\cdot)\|_{L^2((0,1) \times \tilde{\Omega})}^2 \Big) 
\\ &\times  \exp\Big( \int \limits_0^{t_n} \Big(C_{f''} \|\partial_x \hat{u}^{sts}(t,\cdot,\cdot)\|_{L^\infty((0,1) \times \tilde{\Omega})}+ \frac{1}{4}\Big) ~\mathrm{d}t \Big)
\end{flalign*}
for all $n=0,\ldots N_t$ and for all measurable sets $\tilde{\Omega}\subset \Omega$.
\end{corollary}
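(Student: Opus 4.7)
The plan is to reduce Corollary~\ref{stochApost2} to Theorem~\ref{stochApost} via a single application of the triangle inequality, since the corollary only replaces the reconstruction $\hat{u}^{sts}(t_n,\cdot,\cdot)$ by the genuine numerical solution $u_h^n$ on the left-hand side.

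First I would write, for each fixed $n$ and each measurable $\tilde{\Omega}\subset\Omega$,
\begin{align*}
\|u(t_n,\cdot,\cdot)-u_h^n(\cdot,\cdot)\|_{L^2((0,1)\times\tilde{\Omega})}
&\leq \|u(t_n,\cdot,\cdot)-\hat{u}^{sts}(t_n,\cdot,\cdot)\|_{L^2((0,1)\times\tilde{\Omega})} \\
&\quad+\|\hat{u}^{sts}(t_n,\cdot,\cdot)-u_h^n(\cdot,\cdot)\|_{L^2((0,1)\times\tilde{\Omega})}.
\end{align*}
Squaring and using the elementary inequality $(a+b)^2\leq 2a^2+2b^2$ immediately yields
\begin{align*}
\|u(t_n,\cdot,\cdot)-u_h^n(\cdot,\cdot)\|_{L^2((0,1)\times\tilde{\Omega})}^2
&\leq 2\|u(t_n,\cdot,\cdot)-\hat{u}^{sts}(t_n,\cdot,\cdot)\|_{L^2((0,1)\times\tilde{\Omega})}^2\\
&\quad+2\|\hat{u}^{sts}(t_n,\cdot,\cdot)-u_h^n(\cdot,\cdot)\|_{L^2((0,1)\times\tilde{\Omega})}^2.
\end{align*}

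Then I would invoke Theorem~\ref{stochApost} at $s=t_n\in[0,T]$, which applies because $\tilde{\Omega}$ is an arbitrary $\P$-measurable subset of $\Omega$, to bound the first term by
\begin{align*}
\Big(\|\cR^{sts}\|_{L^2((0,t_n)\times(0,1)\times\tilde{\Omega})}^2+\|u^0-\hat{u}^{sts}(0,\cdot,\cdot)\|_{L^2((0,1)\times\tilde{\Omega})}^2\Big)\\
\times\exp\Big(\int\limits_0^{t_n}\Big(C_{f''}\|\partial_x\hat{u}^{sts}(t,\cdot,\cdot)\|_{L^\infty((0,1)\times\tilde{\Omega})}+\tfrac{1}{4}\Big)~\mathrm{d}t\Big).
\end{align*}
Substituting this estimate into the squared triangle inequality gives precisely the inequality claimed in the corollary, with the factor $2$ appearing both on the residual/initial-data term and on the reconstruction-versus-numerical-solution term.

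There is essentially no obstacle here: the argument is purely algebraic once Theorem~\ref{stochApost} is in hand. The only thing worth verifying along the way is that the additional term $\|\hat{u}^{sts}(t_n,\cdot,\cdot)-u_h^n(\cdot,\cdot)\|_{L^2((0,1)\times\tilde{\Omega})}^2$ is itself computable from the discrete data, which follows from Definitions~\ref{numsol} and~\ref{stsreconst} together with the orthonormality relation (\ref{orthogonal}) applied over $\tilde\Omega$; this ensures the right-hand side retains the a~posteriori character of the estimator.
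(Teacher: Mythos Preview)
Your proposal is correct and matches the paper's approach exactly: the paper simply states that the corollary follows ``by means of the triangle inequality'' from Theorem~\ref{stochApost}, which is precisely your argument of splitting $u-u_h^n$ through $\hat u^{sts}$, squaring via $(a+b)^2\le 2a^2+2b^2$, and inserting the bound from Theorem~\ref{stochApost} at $s=t_n$. One small caveat on your closing remark: the orthonormality relation~(\ref{orthogonal}) holds over $\Omega$, not over an arbitrary $\tilde\Omega$, so computability of $\|\hat u^{sts}(t_n,\cdot,\cdot)-u_h^n\|_{L^2((0,1)\times\tilde\Omega)}^2$ does not reduce to a sum of mode-wise $L^2$-norms unless $\tilde\Omega=\Omega$; this does not affect the validity of the inequality itself.
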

\subsection{An orthogonal decomposition of the space-time-stochastic residual}
Due to Corollary \ref{stochApost2} we have a computable upper bound for the space-stochastic error. However, we still cannot distinguish between the errors arising from spatial and stochastic discretization. We therefore want to show a remarkable property of the space-time-stochastic residual $\cR^{sts}$ from Definition \ref{stsreconst}, which indeed allows us to distinguish between spatial and stochastic error.
Namely, the coefficients of the projection of $\cR^{sts}$ onto $\text{span}\{\Psi_0,\ldots,\Psi_N\}$ coincide with the coefficients of $\fR^{st}$.
This can be seen from the following computation:
\begin{align*}
\Big\langle \cR^{sts}, \Psi_l \Big\rangle &= \Big\langle  \partial_t \hat{u}^{sts}+ \partial_x f(\hat{u}^{sts})-S, \Psi_l  \Big\rangle
\\ &= \int \limits_\Omega  \Big(\partial_t \sum \limits_{k=0}^N (\reconst)_k \Psi_k+ \partial_x f(\sum \limits_{k=0}^N (\reconst)_k \Psi_k) -S \Big) \Psi_l ~\mathrm{d}\p (\omega)
\\ & = \partial_t (\reconst)_l + (\partial_x \ff(\reconst))_l - \fS_l = (\fR^{st} )_l.
\end{align*}
On the other hand, for $l>N$,
\begin{align*}
\Big\langle \cR^{sts}, \Psi_l \Big\rangle =  \int \limits_\Omega \big( \partial_x f(\sum \limits_{k=0}^N (\reconst)_k \Psi_k)- S \big) \Psi_l ~\mathrm{d}\p(\omega) =: (\fR^{stoch})_l.
\end{align*}
These properties of $\cR^{sts}$ translate into the following
\begin{theorem}[Orthogonal decomposition of the space-time-stochastic residual]
\label{normOrtho} The space-time-stoch-astic residual $\cR^{sts}$, from Definition \ref{stsreconst},  admits the following orthogonal decomposition in $L^2(\Omega)$,
\begin{align} \label{orthodecomp}
\cR^{sts} = \sum \limits_{i=0}^N (\fR^{st})_i \Psi_i + \sum \limits_{i=N+1}^\infty (\fR^{stoch})_i \Psi_i. 
\end{align}
Further,
\begin{align} \label{orthonorm}
\| \cR^{sts} \|_{L^2((0,s) \times (0,1) \times \Omega)}^2   = & \sum \limits_{i=0}^N \| (\fR^{st})_i \|_{L^2( (0,s) \times (0,1))}^2 +
\sum \limits_{i=N+1}^\infty \| (\fR^{stoch})_i \|_{L^2( (0,s) \times (0,1))}^2 \notag 
\\ =: & \cR^{st} + \cR^{stoch}.  
\end{align}
\end{theorem}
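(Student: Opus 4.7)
The plan is to push through the pointwise Fourier computation that already appears just before the statement, and then integrate over space and time using nonnegativity. First, I will observe that the calculation preceding the theorem already provides the two crucial identities: for each $l \in \{0,\dots,N\}$ the orthonormality relation (\ref{orthogonal}) together with the fact that $\hat{u}^{sts}$ has only the first $N{+}1$ stochastic modes forces
\[
\Big\langle \cR^{sts}(t,x,\cdot), \Psi_l \Big\rangle_{L^2(\Omega)} = \partial_t (\reconst)_l + \big(\partial_x \ff(\reconst)\big)_l - \fS_l = (\fR^{st})_l(t,x),
\]
while for $l>N$ the time derivative term $\partial_t \hat{u}^{sts}$ is orthogonal to $\Psi_l$ (since $\partial_t \hat{u}^{sts} \in \mathrm{span}\{\Psi_0,\dots,\Psi_N\}$ for every $(t,x)$), so only $(\fR^{stoch})_l$ survives.

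Next I would invoke completeness of $\{\Psi_i\}_{i \in \N_0}$ as an orthonormal basis of $L^2(\Omega)$ (which is exactly the setting in which the generalized polynomial chaos expansion (\ref{series}) was postulated). For fixed $(t,x)$, the summands of $\cR^{sts}$ lie in $L^2(\Omega)$: the time-derivative piece is a finite sum of $\Psi_k$ with bounded coefficients, the flux term is in $L^2(\Omega)$ because $f$ is $C^2$ and $\hat{u}^{sts}$ is essentially bounded on $\tilde\Omega$ (via Lemma \ref{dx} and Assumption \ref{assumptionOrth}), and $S$ is in $L^2(\Omega)$ by (A2). Hence $\cR^{sts}(t,x,\cdot)$ admits the generalized Fourier expansion
\[
\cR^{sts}(t,x,\cdot) = \sum_{i=0}^{\infty} \Big\langle \cR^{sts}(t,x,\cdot), \Psi_i \Big\rangle \Psi_i
\]
in $L^2(\Omega)$, and substituting the two identities above yields (\ref{orthodecomp}).

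For the norm identity (\ref{orthonorm}) I would apply Parseval's identity pointwise in $(t,x)$, producing
\[
\|\cR^{sts}(t,x,\cdot)\|_{L^2(\Omega)}^2 = \sum_{i=0}^N (\fR^{st})_i(t,x)^2 + \sum_{i=N+1}^\infty (\fR^{stoch})_i(t,x)^2,
\]
then integrate both sides over $(0,s)\times(0,1)$. Because every summand is nonnegative, Tonelli's theorem permits swapping the integral with the infinite sum without any extra hypotheses, and the measurability required is supplied by the regularity of $\reconst$ from Lemma \ref{dx} together with the measurability of $S$. This directly delivers (\ref{orthonorm}).

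The one technical point that requires care — and which I regard as the main (minor) obstacle — is the justification that the Fourier series actually converges in $L^2(\Omega)$ so that Parseval applies; this rests on the completeness of the chosen orthonormal system in $L^2(\Omega)$, which holds for the polynomial chaos bases, Haar wavelets, and multi-element bases listed in Example \ref{polys}. Once completeness is invoked, the remainder is a bookkeeping exercise combining the explicit coefficient computation, Parseval, and Tonelli.
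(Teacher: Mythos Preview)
Your proposal is correct and follows essentially the same approach as the paper: the paper's proof simply states that (\ref{orthodecomp}) follows from the coefficient computations immediately preceding the theorem and that (\ref{orthonorm}) is an application of the Pythagorean theorem in $L^2(\Omega)$. You have merely filled in the technical details (completeness, Parseval, Tonelli) that the paper leaves implicit.
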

\begin{proof}
Formula (\ref{orthodecomp}) follows from the previous computations and formula (\ref{orthonorm}) is an application of the Pythagorean theorem for the Hilbert space $L^2(\Omega)$.
\end{proof}

\begin{remark}\label{rem:ini}
In the same manner we can find an orthogonal decomposition of the approximation error in the initial condition.
Let us define the orthogonal projection $\Pi_N (u^0):= \sum \limits_{m=0}^N \langle u^0, \Psi_m \rangle \Psi_m$. Then, the Pythagorean theorem implies
\begin{flalign*}
\| u^0-\hat{u}^{sts}(0,\cdot,\cdot)\|_{L^2((0,1) \times {\Omega})}^2 =& \|u^0-\Pi_N (u^0)\|_{L^2((0,1) \times {\Omega})}^2 + \|\Pi_N (u^0) - \hat{u}^{sts}(0,\cdot,\cdot)\|_{L^2((0,1) \times {\Omega})}^2 &
\\ =& \| \sum_{m=N+1}^\infty \langle u^0, \Psi_m\rangle \|_{L^2(0,1)}^2 +  \sum \limits_{m=0}^N \| \langle u^0, \Psi_m \rangle -  \hat{\fu}_m^{st}(0,\cdot)\|_{L^2(0,1)}^2 &
\\  =&: \cE_0^{stoch} + \cE_0^{st} .&
\end{flalign*}
\end{remark}
Theorem \ref{normOrtho} allows us to decompose the error estimator for the space-time-stochastic error into parts quantifying the stochastic and the space-time discretization error, 
respectively.
The stochastic error, introduced by truncating the Fourier series in (\ref{series}), can be quantified by 
$\cE_0^{stoch}  + \cR^{stoch}$. 
The space-time discretization error, i.e., the error which arises by discretising the SG system (\ref{SGSystem}) in space and time, 
can be quantified by  $ \cE_0^{st}  +\cR^{st}$.
The optimality of the spatial residual in the case of linear hyperbolic equations was proven in \cite{DG_16}.
Optimality means that the space-time residual has the same order of convergence as the DG scheme.
However, in the nonlinear case, the optimality of the residual in the first time step is still open. We address this issue in Section 4.
For the stochastic residual we expect, at least for smooth solutions, spectral convergence.
Let us mention how to compute $\fR^{stoch}$ in the case of the linear advection equation and Burgers' equation.
\begin{example} \label{example2}~
\begin{enumerate}[label=(\alph*)]  
\item In the case of the linear advection equation the stochastic residual $(\cR^{stoch})_l$, $l>N$, in (\ref{orthodecomp}) vanishes due to orthogonality. This means that the stochastic error is only inferred from projecting the initial condition onto the orthonormal system, cf. Example \ref{example1} (a). Due to linearity of the advection equation the initial stochastic error does not increase when advancing in time. 
\item In the case of Burgers' equation and classical global orthonormal chaos polynomials, we have
\begin{align*}
\int \limits_\Omega  \partial_x f\Big(\sum \limits_{k=0}^N (\reconst)_k \Psi_k\Big) \Psi_l~ \mathrm{d}\p(\omega) 
= \frac{1}{2} \int \limits_\Omega  \partial_x \Big(\sum \limits_{k=0}^N (\reconst)_k \Psi_k\Big)^2 \Psi_l ~ \mathrm{d}\p(\omega).
\end{align*}
Again, due to orthogonality it follows that $\int \limits_\Omega  \partial_x f\Big(\sum \limits_{k=0}^N (\reconst)_k \Psi_k\Big) \Psi_l ~\mathrm{d}\p(\omega) =0$ 
for $l>2N$. We therefore only need to compute $(\fR^{stoch})_l$ for $l=N+1,\ldots,2N$. Moreover,
\begin{align*}
\frac{1}{2} \int \limits_\Omega  \partial_x \Big(\sum \limits_{k=0}^N (\reconst)_k \Psi_k\Big)^2 \Psi_l ~ \mathrm{d}\p(\omega)  &= (\partial_x \reconst)^\top C_l \reconst  ,
\end{align*}
where $[C_k]_{i,j=0}^N:= \int \limits_{\Omega} \Psi_j\Psi_i\Psi_k ~ \mathrm{d}\p(\omega)$, $k =N+1,\ldots,2N$, is the symmetric triple product matrix. 
\item For a multi-element approach, as discussed in Remark \ref{polys}, we can compute the spatial and stochastic residual on each stochastic element 
$I_l^{N_e}$, $l=0,\ldots, 2^{N_e}$ in parallel.
\end{enumerate}
\end{example}
\section{Numerical Examples}
In this section we present numerical experiments, where we examine the scaling behavior of the space-time-stochastic residual. 
For the following test cases, we consider the classical polynomial chaos expansion using Legendre orthonormal polynomials for uniformly distributed random variables. 
As numerical solver for the SG system we use the Runge-Kutta Discontinuous Galerkin solver \textit{Flexi} \cite{Hindenlang2012}. 
For the time-stepping we use an explicit RK3-7 method \cite{TOULORGE20122067}, a time-reconstruction of order three and for the physical space we use  DG polynomials of degree one or two.
As numerical flux for the linear advection equation, we choose the upwind flux, 
 \begin{align*}
 \G(\fu,\fv)= \ff(\fw(\fu,\fv)), ~\fw(\fu,\fv)= \fu,
\end{align*}
and as projection operator for the initial data, we choose the Radau-projection operator $\mathcal{I}_{V_p^s}= \mathbb{R}_h^{+}$, as defined in \cite{Radau}.
For the Burgers equation, where we have no upwind transport, we use the Lax-Wendroff numerical flux 
 \begin{align*}
 \G(\fu,\fv)= \ff(\fw(\fu,\fv)), ~\fw(\fu,\fv)= \frac{1}{2}\Big( ({\fu +\fv})+ \frac{\Delta t}{ h} \big(\ff(\fv)- \ff(\fu) \big) \Big).
\end{align*}
In this case, we use Gauss-Legendre interpolation of the initial data in our numerical experiments. 
We have also tried other interpolation and projection operators and all of them lead to the same scaling behavior of the space-time residual.
The integrals are numerically evaluated  by a Gau\ss-Legendre quadrature in space, time and the stochastic space. For the time integration we use 8 points, in the physical space we use 25 points, and in the stochastic space 80 points.
\begin{remark}
As already mentioned after Remark \ref{rem:ini}, for nonlinear hyperbolic equations, the space-time residual is suboptimal (by one order) on the first time step.
Indeed, we loose half an order of convergence in the (global) space-time residual, i.e. when the error is of order $h^{p+1}$ the error estimator is of order $h^{p+1/2}$. 
This is due to a lack of compatibility between the projection/interpolation of the initial data into the DG space and the spatial-reconstruction. 
For the linear advection equation, where we compute the numerical solution using an upwind numerical flux, the Radau projection is the compatible choice, 
as it accounts for the upwind direction. 
Indeed, we have used it in our numerical experiments and observed optimal rates for the error estimator. 
A similar concept for nonlinear equations is up to now missing.
We would like to stress once again that the  estimates based on Kru\v{z}kov's doubling of variables technique leads to a much more reduced rate of convergence for smooth solutions, namely $h^{(p+1)/2}$.
If we start to reconstruct the numerical solution of the Burgers equation from $t=0$, we loose half an order of convergence in the space-time residual. 
Therefore, we start to reconstruct the numerical solution after the first time step on the coarsest mesh, where we conducted our computations. This corresponds to $t=0.008$. 
We also start to integrate the space-time residual from $t=0.008$ and obtain the full order of convergence in the space-time residual. 
\end{remark}
\subsection{The linear advection equation} 
We consider the linear advection equation
$ \partial_t u +2 \partial_x u =0,$
on the spatial domain $[0,2]_{per}$ and with $T=0.2$. We start the computation with 16 elements and a time-step size of $\Delta t=0.02$. 
We then subsequently reduce $h$ and $\Delta t$ by a factor of two. The initial condition is given by 
$ u^0(x,\xi)= \xi(1-0.5\cos(\pi x)),$ where we assume $\xi \sim \mathcal{U}[1,3]$ to be uniformly distributed. 
In Figures \ref{advp0} and \ref{advp2} we show the space-stochastic numerical error between the exact solution $u(t,x,\xi)= \xi(1-0.5\cos(\pi (x-2t)))$ and the numerical solution 
computed by the Runge-Kutta Discontinuous Galerkin method for one and three chaos polynomials ($N=0, 2$), evaluated at $t_n=T$. 
Thanks to $C_{f''}=0$ in this special case, the exponential term of the error indicator from Corollary \ref{stochApost2} vanishes. 
We further plot the norm of the space-time residual, denoted by $\cR^{st}$, as in Theorem \ref{normOrtho}. 
In Figure  \ref{advp0} we can see that the space-stochastic error is not decreasing when $h$ tends to zero. 
This is due to the term $\cE^{stoch}_0$, cf. Remark \ref{rem:ini}. 
The overall error is dominated by the error we make in projecting the initial condition onto span$\{\Psi_0\}$. 
If we increase the number of orthonormal polynomials to three, we obtain an exact representation of the initial condition in the orthonormal basis, i.e., $\cE^{stoch}_0=0$. 
Therefore, the space-stochastic numerical error only consists of the space-time discretization error, this can be seen in Figure \ref{advp2}.  
After increasing the polynomial chaos degree, the space-stochastic error  converges with the same order as the spatial residual.
Furthermore, for $p=1,2$ both residuals have the correct order of convergence, that is $p+1$.
\begin{figure}[!h]
\centering
\includegraphics[width=\figurewidth, height=\figureheight]{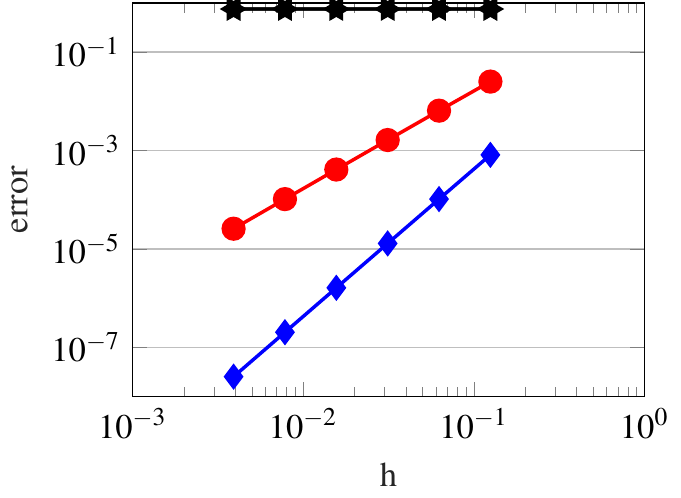}
\includegraphics[width=\figurewidth, height=1.2\figureheight]{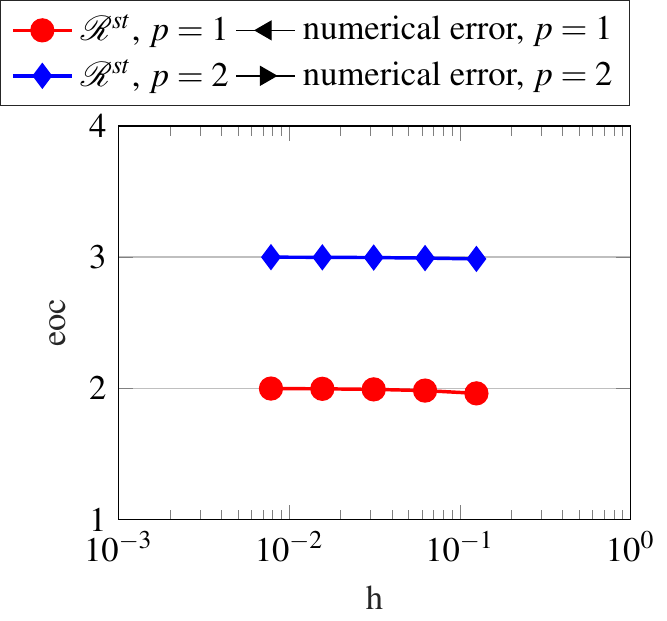}
\caption{Error and eoc plot for the linear advection equation in the case of one \newline orthonormal polynomial and DG polynomial degrees $p=1,2$.}\label{advp0}
\end{figure}
\begin{figure}[!h]
\centering
\includegraphics[width=\figurewidth, height=\figureheight]{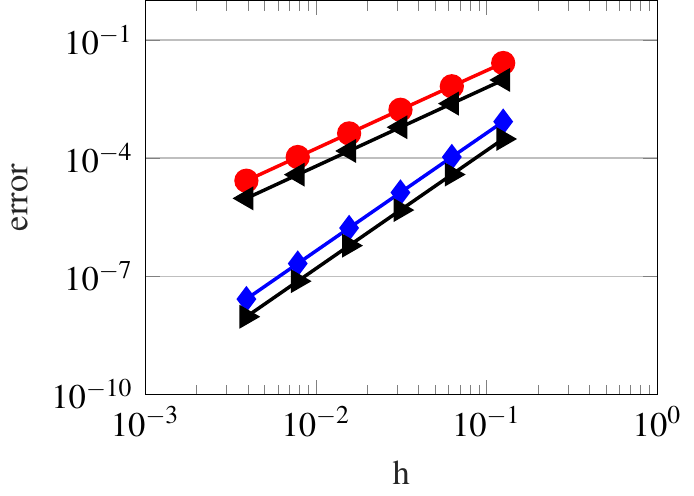}
\includegraphics[width=\figurewidth, height=1.2\figureheight]{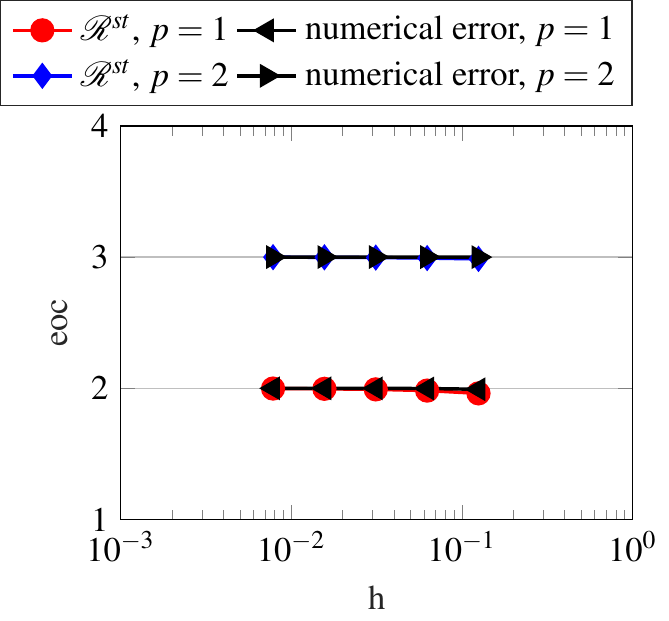}
\caption{Error and eoc plot for the linear advection equation in the case of three \newline orthonormal polynomials and DG polynomial degrees $p=1,2$.}\label{advp2}
\end{figure}
\subsection{The Burgers equation with smooth solution}
In this numerical example we consider Burgers' equation $\partial_t u+ \partial_x(\frac{u^2}{2})=S$ with the source term 
$$S(t,x,\xi)= \pi \xi^2 \sin(\pi (x-\xi t)) \big(\cos(\pi (x-\xi t)) -1 \big),$$ $\xi \sim \mathcal{U}[1,3]$.  For the initial condition $u^0(x,\xi)= \xi\cos(\pi x)$, the exact solution is 
$$u(t,x,\xi)= \xi\cos(\pi (x-\xi t)). $$
The numerical solution is computed up to time $T=0.2$ on the spatial domain $[0,2]_{per}$ and we start the computations initially on a mesh with 16 elements 
and time step size  $\Delta t =0.008$. Again we reduce $h$ and $\Delta t$ by a factor of two. The DG polynomial degree is two and the reconstruction in time is of order three. 
In the following numerical computations we consider different cases, where on the one hand we refine the physical space 
and on the other hand we increase the polynomial degree of the orthonormal (chaos) polynomials. 
The latter corresponds to a refinement in the stochastic space. 
We plot $\cR^{st}$, $\cR^{stoch}$  as in Theorem \ref{normOrtho} and $\| u(T,\cdot,\cdot) - u_h^{N_t} \|_{L^2((0,2) \times \Omega)}$,
which we call numerical error.
\begin{figure}[!htb] 
\centering
\subfigure[Error plot for the Burgers equation in  the case of  one  \newline orthonormal polynomial and DG polynomial degree $p=2$.]{
\includegraphics[width=\figurewidth, height=\figureheight]{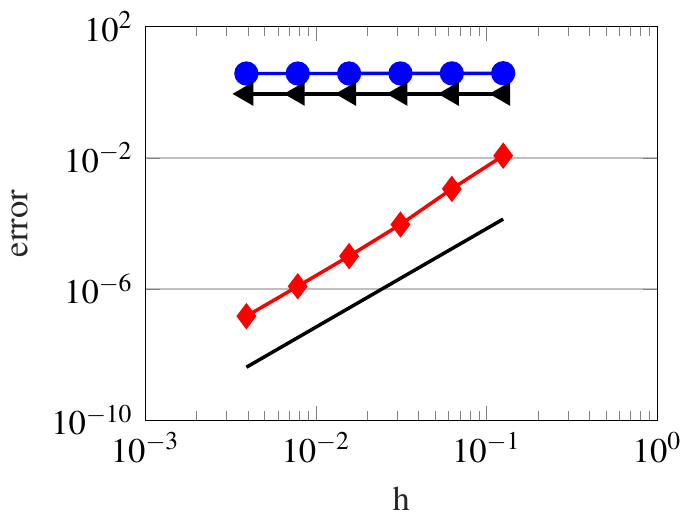} }
\subfigure[Error plot for the Burgers equation in  the case of  five  \newline orthonormal polynomials  and DG polynomial degree $p=2$.]{
\includegraphics[width=\figurewidth, height=\figureheight]{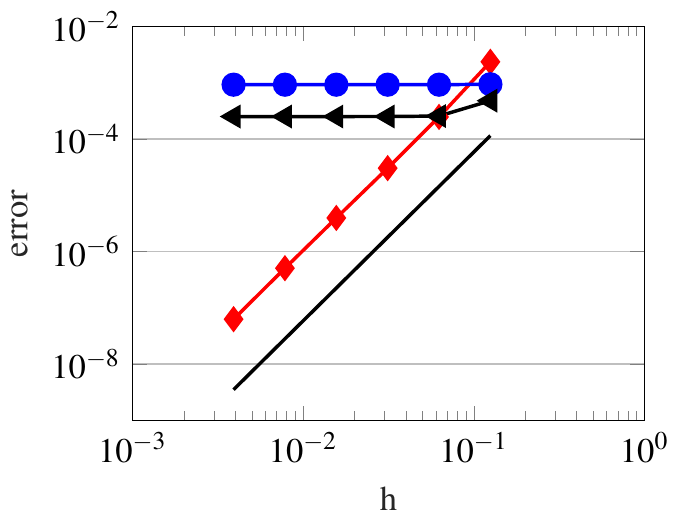} } 
\subfigure[Error plot for the Burgers equation in  the case of  thirteen  \newline orthonormal polynomials  and DG polynomial degree $p=2$.]{
\includegraphics[width=\figurewidth, height=1.2\figureheight]{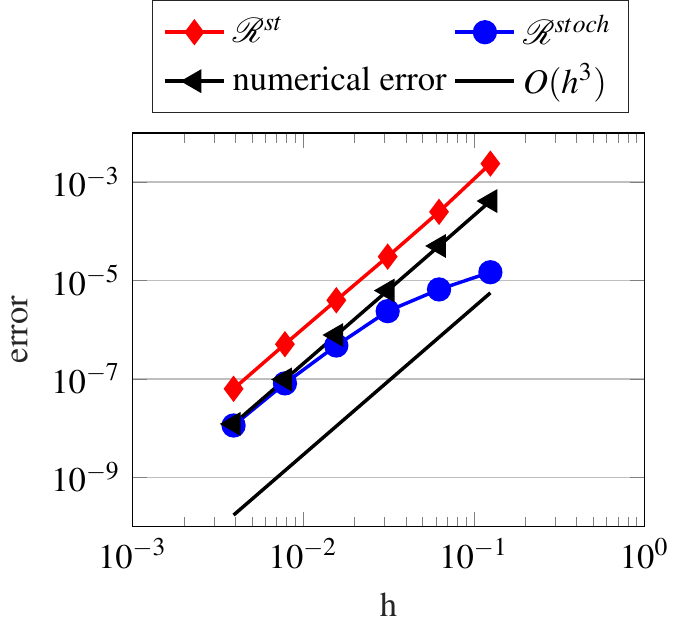}} 
\subfigure[Behavior of exponential factor for one,  five and thirteen \newline orthonormal polynomials.]{
\includegraphics[width=\figurewidth, height=1.2\figureheight]{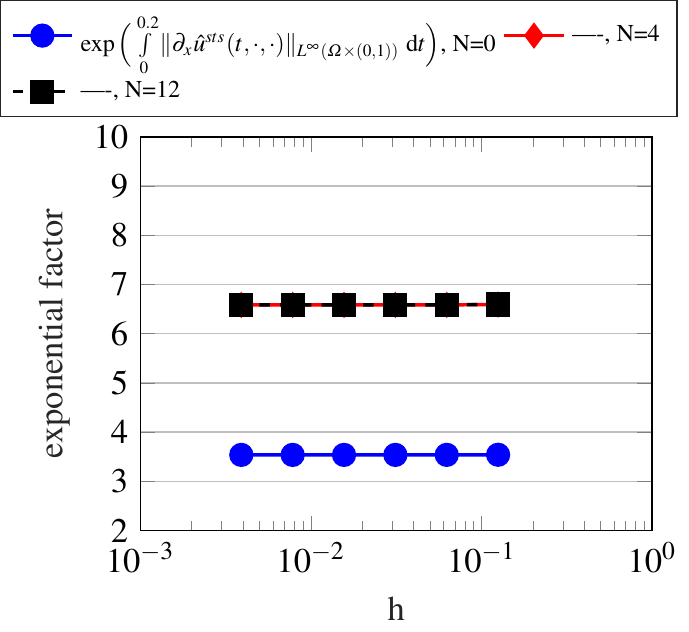} } 
\caption{Error plots for the Burgers equation for $h$-refinement. }
\label{plot:BurgershRef}
\end{figure}
We start our computations by considering mesh refinements in the physical space with a fixed polynomial chaos degree. 
In Figure  \ref{plot:BurgershRef} (a) we display the numerical solution using only one chaos polynomial, which corresponds to $N=0$. 
We can see that the overall error is clearly dominated by  $\cE_0^{stoch} + \cR^{stoch}$. To reduce the overall error significantly, 
we have to increase the polynomial chaos degree.
We increase the number of polynomials to five, corresponding to $N=4$ . We observe in Figure \ref{plot:BurgershRef} (b), 
that for the coarse space discretization with 16 elements the space-stochastic error is dominated by the space-time residual.  
After this point any significant reduction of the overall error requires again an increase of the polynomial chaos degree.
When increasing the polynomial degree to thirteen ($N=12$), we can see in Figure \ref{plot:BurgershRef} (c) that the overall error is now dominated by the space-time residual 
as the stochastic discretization is fine enough. The overall error now converges with the same rate as the space-time residual. 
Additionally, in Figure \ref{plot:BurgershRef} (d) we plot the exponential factor from Theorem \ref{stochApost} for different polynomial degrees $N$ and different mesh sizes $h$.  We can see that in the smooth case the exponential factor stays bounded for $h\to 0$. The exponential factor for $N=0$ is smaller than for $N=4,12$ because we solve the (SG)-system only for the mean value.

In the previous computations we have considered spatial refinements for a fixed polynomial chaos degree. Now we want to examine the behavior of  $\cR^{stoch}$ for different mesh sizes and a DG polynomial degree two. 
 In Figure \ref{plot:BurgersNRef} (a) we show results for a fixed spatial discretization with 16 elements.
We can see that the space-stochastic error is dominated by the space-time residual, because the spatial discretization is too coarse. 
We can also see, that the spatial residual remains unchanged by increasing the polynomial chaos degree. 
Additionally, we note that the stochastic residual exhibits spectral convergence.
 To reduce the overall error we therefore need to increase the number of spatial elements or the DG polynomial degree.
Finally, in Figure \ref{plot:BurgersNRef} (b) we consider a very fine mesh, consisting of 1024 elements. 
Due to the fine resolution of the physical space, the overall error is dominated by the stochastic residual up to $N=8$ and can only be decreased by increasing the polynomial chaos degree.
After that point, the overall error can only be significantly decreased by performing a spatial refinement. 
We can now also see how the overall space-stochastic error converges spectrally until its convergence is again dominated by the spatial error.

Let us note that the spectral convergence of the stochastic residual is in accordance with what is known theoretically for stochastic discretization errors in SG schemes.
Indeed, the authors of \cite{420bdedc3eb340c0b55bf59b120d7e5d} prove spectral convergence of the SG method for the linear transport equation with random transport velocity. 
This indicates that the stochastic residual allows us to assess the magnitude of the stochastic discretization error in an optimal (spectral) way.
\begin{figure}[!h]
\centering
\subfigure[Error plot for the Burgers equation for a fixed mesh with  16 elements and DG polynomial degree $p=2$.]{
\includegraphics[width=\figurewidth, height=\figureheight]{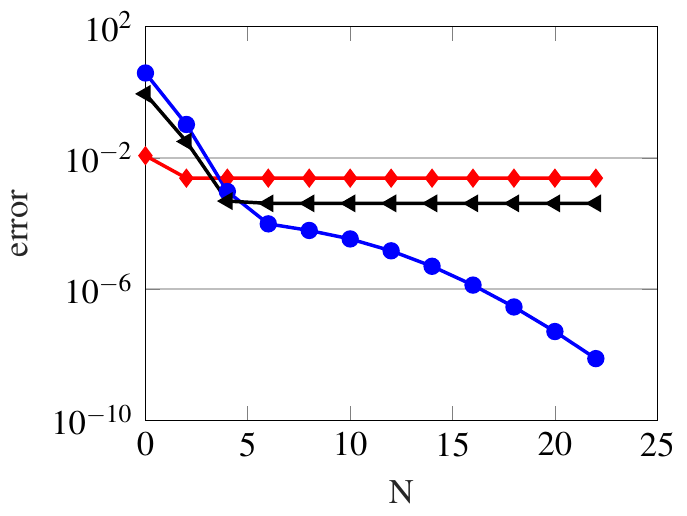} }
\subfigure[Error plot for the Burgers equation for a fixed mesh with  1024 elements and DG polynomial degree $p=2$.]{
\includegraphics[width=\figurewidth, height=\figureheight]{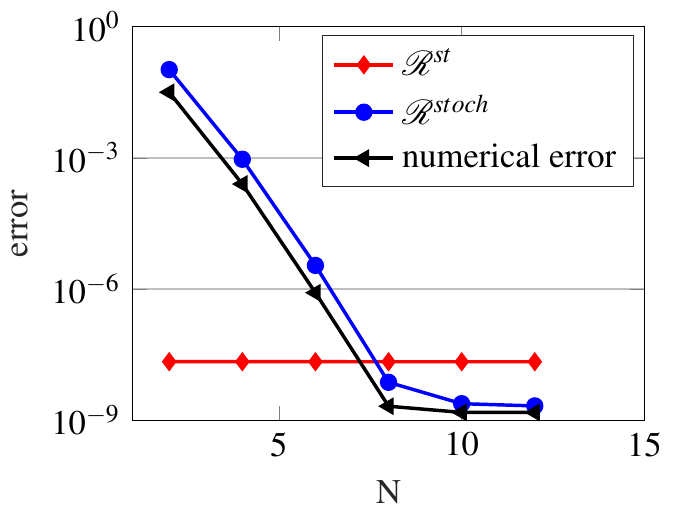}  } 
\caption{Error plots for the Burgers equation for $N$-refinement.}
\label{plot:BurgersNRef}
\end{figure}
\subsection{The Burgers equation, the artificial shock case}
\begin{figure}[!h]
\centering
\subfigure[$N=1$, residuals indicated on the right y-axis]{
\includegraphics[width=\figurewidth, height=\figureheight]{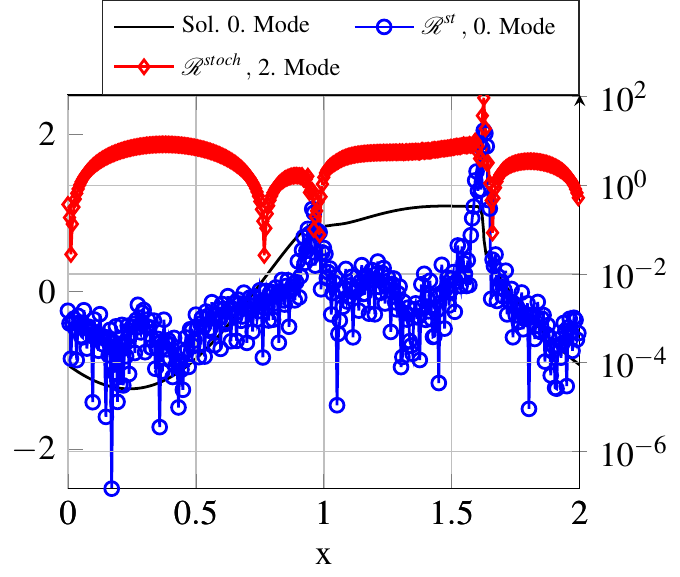} }
\subfigure[$N=2$, residuals indicated on the right y-axis] {
\includegraphics[width=\figurewidth, height=\figureheight]{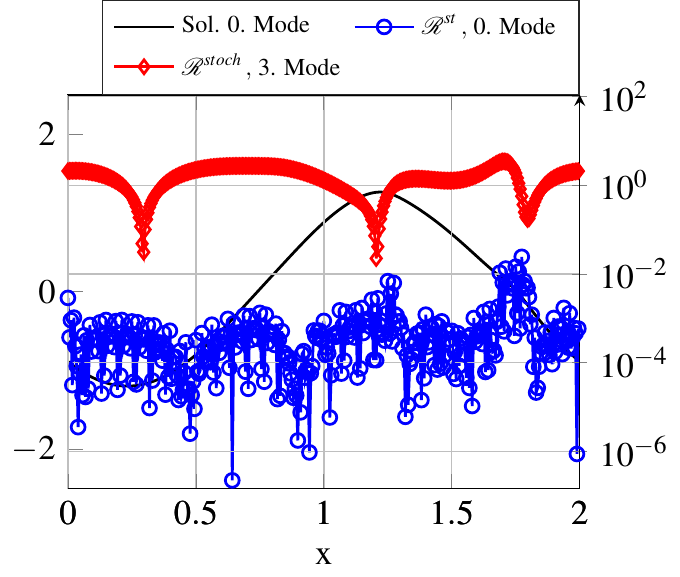} }
\caption{Plot of the numerical solution, spatial residual and stochastic residual for the Burgers \newline equation in the case of two and three orthonormal polynomial  and DG polynomial degree $p=2$.} 
\label{shock2}
\end{figure}
\begin{figure}[!h]
\centering
\includegraphics[width=2\figurewidth, height=1.2\figureheight]{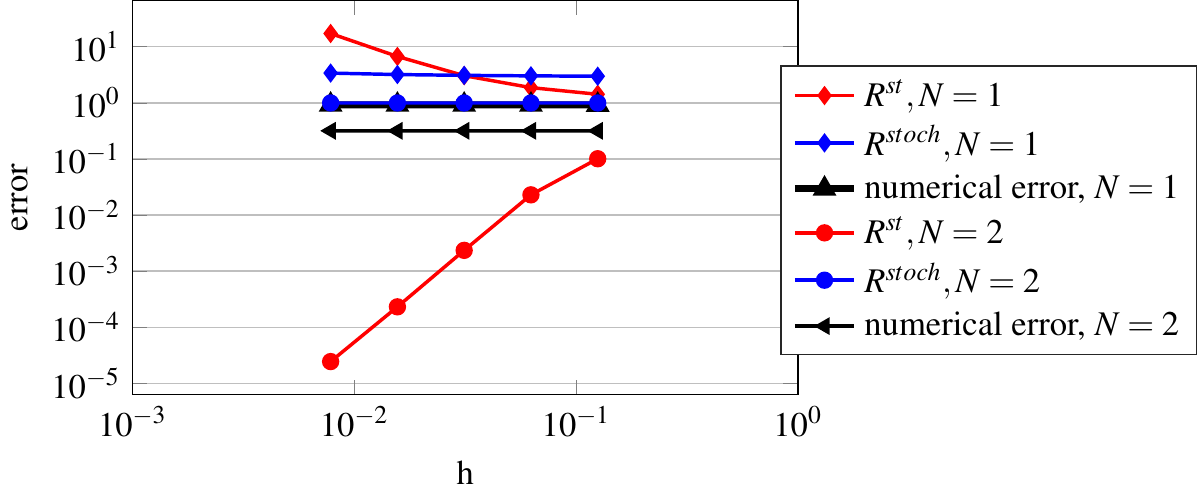}
\caption{Comparison of residuals and numerical error for one and 
two orthonormal polynomials and \newline DG polynomial degree $p=2$.} \label{shock1}
\end{figure}
We study now the same example as in the previous section, but we compute numerical solutions up to $T=0.56$ and use the slope limiter $\Lambda \Pi_h$ from \cite{CockburnShu2001}.
It is a well-known drawback of the SG-methodology, cf.~the numerical example in Section 6 of \cite{pettersson2015polynomial}, that, even if the solution to (RIVP)  is smooth up to time $T$, 
the solutions of the (\ref{SGSystem})-system may develop discontinuities before that time.
Indeed, this is the case in the example at hand.
In Figure \ref{shock2}(a), we display the numerical solutions and residuals for $N=1$ at time $T$.
The solution of the zeroth mode appears to contain a shock at approximately $x\approx 1.6$.
We see that the spatial 
discontinuity is clearly picked up by the spatial residual and, to some extent, also in the stochastic residual.
Moreover, we can see in Figure \ref{shock1}, that for $h \to 0$ the spatial residual blows up, although the numerical error stays constant.
We may therefore use the spatial residual as an indicator for spatial mesh refinements, which will be left for future research.
 
Also the stochastic residual grows with $h\to 0$, however very slow compared to the spatial residual.
Increasing the polynomial chaos degree to $N=2$, also increases the smoothness of the numerical solution, which can be seen in Figure \ref{shock2} (b).
In Figure \ref{shock1}, we can see that for $N=2$ the spatial residual decreases when $h$ tends to zero. 

The artificial generation of shocks is a general problem of the SG method,
however our residuals were able to detect the position of the shock correctly.
This demonstrates that our error estimator picks up on the ``artificial'' discontinuity and can help the user to determine a more suitable polynomial degree in which  no such discontinuities are present.
It should be noted, however, that the estimator cannot distinguish between discontinuities in the solution $u$ of \eqref{stochCL} and ``artificial'' discontinuities in the solution of the (\ref{SGSystem})-system.
Furthermore, there are cases in which the ``artificial''  discontinuities can not as easily be avoided as in the example at hand. 
In one example in \cite[Section 6]{pettersson2015polynomial} each (SG)-solution contains shocks and for increasing $N$ the number of shocks increases while their size decreases.
From the numerical point of view (for some fixed $h>0$) a solution with very small shocks cannot be distinguished from a smooth solution.
In such a case it is probably possible to obtain a sequence of approximate solutions for which the error estimator converges by choosing $h$ and $N$ such that they are properly related,
but a detailed discussion of this issue is beyond the scope of this work.
\subsection{The Burgers equation, the shock case}
As last numerical example we consider a Riemann problem for Burgers' equation, where the jump size of the discontinuity and thus the shock speed is random. The spatial domain is now $[-1,1]$ and we set $T=0.1$. The initial condition is given by
\begin{align*} u^0(x,\xi)=
\begin{cases}
1+ \xi, &\text{ if } x\leq 0, \\
0.5+ \xi &\text{ else,}
\end{cases}
\end{align*}
where $\xi \sim \mathcal{U}[-0.2,0.2]$. The shock speed can be computed as $s(\xi) = 1.5+2\xi$ and therefore the exact solution is given by
\begin{align*} u(t,x,\xi)=
\begin{cases}
1+ \xi, &\text{ if } x\leq s(\xi)t\\
0.5+ \xi &\text{ else.}
\end{cases}
\end{align*}
As the shock speed is $\P-$a.s.~positive, we use the upwind numerical flux in this numerical experiment. We use the slope limiter $\Lambda \Pi_h $ from \cite{CockburnShu2001} and consider a very fine physical resolution with 512 elements and DG polynomial degree of 2, Although the exact solution is discontinuous we can see in Figure \ref{fig:realshock} that the stochastic residual exhibits exponential convergence for a sufficiently large number of orthonormal polynomials. This is due to the fact that the coefficients in the polynomial chaos expansion are smooth, cf. \cite[Section 6]{pettersson2015polynomial}.
Hence, although the exact solution is discontinuous the stochastic residual displays the correct (spectral) type of convergence and moreover,
it gives us information about the resolution in the stochastic space independent of the spatial resolution.
\begin{figure}[!h]
\makebox[\linewidth]{ 
\includegraphics[width=2\figurewidth, height=1.2\figureheight]{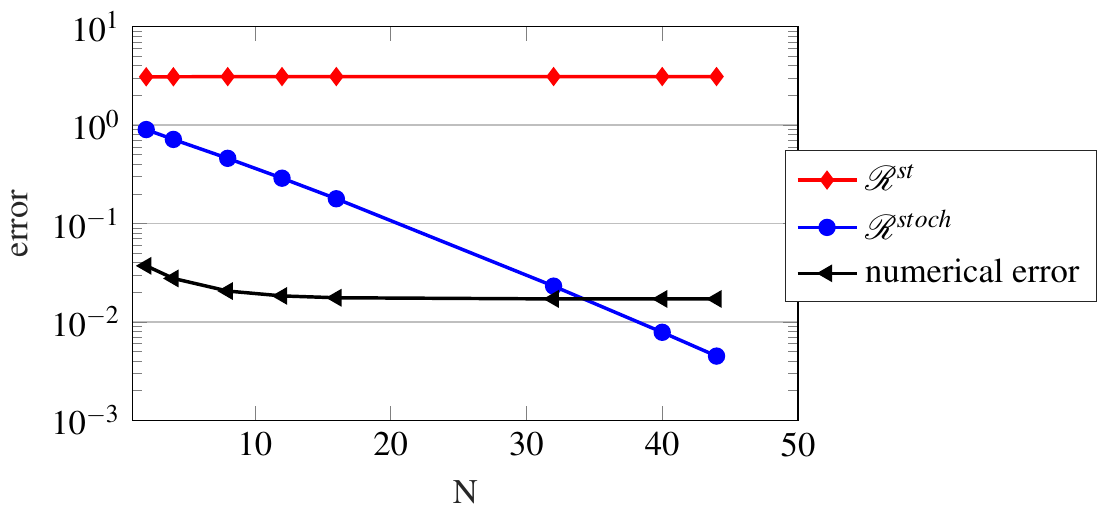} }
\caption{Error plot for the Burgers equation for a fixed mesh with 512 elements and DG \newline polynomial degree $p=2$.}
 \label{fig:realshock}
\end{figure}
\section{Conclusions}

This work provides a first rigorous a posteriori error analysis of scalar conservation laws with uncertain initial conditions and source terms.
We derived an a posteriori error estimator for the random conservation law and additionally showed that the space-time-stochastic residual admits an orthogonal splitting 
into a residual representing the space-time error and a residual, representing the stochastic error. 
Numerical experiments demonstrate that the two parts of the error estimator scale in $N$ and $h$ as expected. Moreover, in the case of the linear advection equation, the space-time residual exhibits the optimal order of convergence.
Furthermore, the numerical experiments showed that both residuals serve as indicators to determine if, after a completed computation, 
one should increase the polynomial chaos degree or perform a spatial refinement to decrease the overall numerical error. 
In the case of a spatial discontinuity, the discontinuity is also visible in the stochastic residual. 

Future work will focus on the a posteriori error analysis of systems of conservation laws with uncertain initial conditions or uncertain parameters, 
for example for the Euler equations. Due to the relative entropy framework we expect our theory to be extendable to this case. 
For further applications of our method, the construction of space-stochastic adaptive schemes using the Hybrid Stochastic Galerkin method  (\cite{MR3269040}) and the residuals as local indicators, will be considered.
\section*{Acknowledgements}

The authors thank the Baden-W{\"u}rttemberg Stiftung for support via the project ``Numerical Methods
for Multiphase Flows with Strongly Varying Mach Numbers'' (J.G.) and the project ``SEAL'' (F.M., C.R.)

\bibliographystyle{IMANUM-BIB}
\bibliography{mybib2}

\clearpage
\end{document}